\newcommand{\RR}{\mathbb R}
\newcommand{\Zd}{{\mathbb Z^d}}
\newcommand{\Td}{{\mathbb T^d}}
\newcommand{\TT}{{\mathbb T^2}}
\newcommand{\pat}{\partial_t}
\newcommand{\pax}{\partial}
\newcommand{\jeps}{\mathcal{J}_\epsilon*}
\newcommand{\vertiii}[1]{{\left\vert\kern-0.25ex\left\vert\kern-0.25ex\left\vert #1 
    \right\vert\kern-0.25ex\right\vert\kern-0.25ex\right\vert}}
\newcounter{comentcount}
\newcounter{teocount}
\newtheorem{lem}{Lemma}
\newtheorem{teo}[teocount]{Theorem}  
\newtheorem{defi}{Definition}
\newtheorem{remark}{Remark}
\title[$2$D fractional Keller-Segel equation with logistic source]{Suppression of blow up by a logistic source in $2$D  Keller-Segel system with fractional dissipation}
\author[J. Burczak]{Jan Burczak}
\email{jb@impan.pl}
\address{Institute of Mathematics, Polish Academy of Sciences, ul. \'Sniadeckich 8, 00-656 Warsaw, Poland}
\author[R. Granero-Belinch\'{o}n]{Rafael Granero-Belinch\'{o}n}
\email{granero@math.univ-lyon1.fr}
\address{Univ Lyon, Universit\'e Claude Bernard Lyon 1, CNRS UMR 5208, Institut Camille Jordan, 43 blvd. du 11 novembre 1918, F-69622 Villeurbanne cedex, France.}
\begin{document}

\begin{abstract}
We consider a two dimensional parabolic-elliptic Keller-Segel equation with a logistic forcing and a fractional diffusion of order $\alpha$. We obtain existence of global in time regular solution for arbitrary initial data with no size restrictions and $c<\alpha\leq 2$, where $c \in (0,2)$ depends on the equation's parameters. For an even wider range of $\alpha's$, we prove existence of global in time weak solution for general initial data.
\end{abstract}

\maketitle 

%\tableofcontents

\section{Introduction}

Let us consider the following drift-diffusion equation on the two-dimensional torus  $\TT$
\begin{equation}\label{eqDD}
\pat u=- \Lambda^\alpha u+\chi\nabla\cdot(u B(u))+f(u),
\end{equation}
where $\Lambda^\alpha =(-\Delta)^{\alpha/2}$, $B(u)$ is a vector of nonlocal operators, $f$ denotes a real function and $\chi $  is a sensitivity parameter. A concrete choice of $B, f, \chi$ yields one of many so-called \emph{active scalar} equations. The notion \emph{active scalar} refers to the main unknown $u$ being a scalar advected by a vector field depending on the scalar itself, here $B(u)$, sometimes under effects of diffusion $\Lambda^\alpha$ or some other forces $f$. This family of equations appears extensively in applied mathematics. In particular, evolution of some of the most famous two-dimensional active scalars can be seen as a special case of \eqref{eqDD}, including:
\begin{itemize}
\item The two dimensional incompressible Euler / Navier-Stokes equation in its vorticity formulation
$$
\alpha=0 \text{ / } \alpha=2,\quad B(u)=(-\partial_{x_2},\partial_{x_1}) (-\Delta)^{-1}u, \quad f=0, \quad  \chi = 1,
$$ 
describing a flow of inviscid/viscous fluid. \vskip 2mm
\item The Surface Quasi-Geostrophic / Dispersive Surface Quasi-Geostrophic equation
$$
0<\alpha\leq 2,\quad B(u)=(-\partial_{x_2},\partial_{x_1}) \Lambda^{-1}u, \quad f=0 \text{ / } f (u) = R_1 u, \quad  \chi = 1,
$$ 
where $R_i$ is the $i-th$ Riesz transform (see Section \ref{ssec:not} for notation), compare  \cite{constantin1994singular, constantin1999formation, constantin2013long,berselli2002vanishing, caffarelli2010drift, KiselevSQG, Omar, Omar2, KisNaz10, Held_sqg}. It models temperature evolution in certain geophysical considerations (meteorology, oceanography, simplified magnetodynamics). Besides, it is supposed to provide insights into behaviour of three-dimensional flows. \vskip 2mm
\item The Incompressible Porous Medium equation (self-explanatorily, related to flows in porous media)
$$
0<\alpha\leq 2, \quad B(u)=(\partial_{x_2}\partial_{x_1}, -\partial_{x_1}^2) (-\Delta)^{-1}u \quad (= - R^{\perp}R_1u),  \quad f=0, \quad  \chi = 1,
 $$
where
$$
R=(R_1,R_2),\quad R^{\perp}=(-R_2,R_1),
$$  see for instance \cite{castro2009incompressible, Friedlander};

\vskip 2mm
\item The Stokes equation
$$
0<\alpha\leq 2, \quad B(u)=(-\Delta)^{-1}R^{\perp}R_1u, \quad f=0, \quad  \chi = 1.
$$ 
see  \cite{bae2015global}. Note that this equation can be written equivalently as the following system of differential equations
\begin{align*}
\pat u+\nabla\cdot(uv)&=0,\\
-\Delta v+\nabla p&=-(0,u),\\
\nabla\cdot v&=0.
\end{align*}
\vskip 2mm
\item The Magnetogeostrophic equation 
$$
0 \leq \alpha\leq 2,\quad \hat{B}(u)=\frac{\left(\xi_2\xi_3|\xi|^{2}-\xi_1\xi_2^2\xi_3,-\xi_1\xi_3|\xi|^{2}-\xi_2^3\xi_3,\xi_1^2\xi_2^2+\xi_2^4\right)}{|\xi|^{2}\xi_3^2+\xi_2^4}\hat{u},   \quad f=0, \quad  \chi = -1,
$$
where $\hat{\cdot}$ denotes the Fourier transform. It is a simplified model for creation of EarthÕs magnetic field,  compare \cite{Friedlander2, Friedlander3, Friedlander4, Friedlander5, Moffatt}.
\vskip 2mm
\item A class of aggregation equations 
$$
0<\alpha\leq 2, \quad B(u)=\nabla K * u, \quad \chi >0,
$$
where $K$ stands for a nonincreasing (thus attractive) interaction kernel.
For the fractional case with $f=0$, see \cite{biler2009blowup, li2009finite, li2009refined, li2010wellposedness, li2010exploding, BilerWu}. The case $\alpha=2$ and $K = -\Delta^{-1}$  is the classical (parabolic-elliptic) Keller-Segel equation, describing concentration of certain microorganisms. Introduction of damping $ f(u) =u(1-u)$ allows to capture birth and death process, compare model M8 of \cite{Hillen3} and \cite{TelloWinkler}. 
Compare also \cite{Gsemiconductor} for the case of a repulsive kernel related to semiconductor devices. \vskip 2mm
\end{itemize}

Some of the presented active scalars may be reformulated also in higher dimensions. However, the case of two dimensions turns out to be the pivotal one mathematically in certain cases. This involves SQG and aggregation equations. The latter, being our main motivation, is discussed more thoroughly in what follows. Let us only remark that despite our main interest in a Keller-Segel related problems, we expect that our approach may be useful for studying a wide range of damped aggregation equations.

\subsection{The Keller-Segel system}
Our main motivation to study \eqref{eqDD} is the following parabolic-elliptic Keller-Segel system with logistic source
\begin{equation}\label{eqPKS}
\pat u=- \Lambda^\alpha u+\chi \nabla\cdot(u B (u) )+ru(1-u),
\end{equation}
with either
\begin{equation}\label{f:B1}
B(u)=\Lambda^{\beta-1}R(1+\Lambda^{\beta})^{-1}(u), \quad \beta >0
\end{equation}
or 
\begin{equation}\label{f:B}
B (u) = \nabla \Delta^{-1}(u -\langle u \rangle) \qquad ( = \Lambda^{\beta-1}R\Lambda^{-\beta} (u -\langle u \rangle) \text{ with any } \beta >0)
\end{equation}
where $\langle u \rangle$ denotes the spatial mean value $\frac{1}{4 \pi^2} \int_\TT u (t)$.
\vskip 2mm

The classical (doubly parabolic) Keller-Segel system
\begin{equation}\label{cKS}
\begin{aligned}
\pat u &= \Delta u-\chi \nabla\cdot(u \nabla v ) \\
\pat v &= \Delta v + u - \gamma v
\end{aligned}
\end{equation}
serves as a model of \emph{chemotaxis}, \emph{i.e.} a proliferation and a chemically-induced motion of cells, see the pioneering work of  Keller \& Segel \cite{keller1970initiation}, reviews by Blanchet \cite{blanchet2011parabolic} and Hillen \& Painter \cite{Hillen3}. In this interpretation $u\geq0$ is a density of cells and $v$ stands for a density of a chemoattractant. The parameter $\chi>0$ quantifies the sensitivity of organisms to the attracting chemical signal and $\gamma \ge 0$ models their decay.  It is fair to recall that before appearance in mathematical biology, the Keller-Segel equation was introduced by Patlak \cite{patlak1953random} in a context of quantitive chemistry and physics (interestingly, in the Bulletin of Mathematical Biophysics). A biologically justified parabolic-elliptic simplification of \eqref{cKS} consists in rewriting the equation for $v$ as  $0= \Delta v + u - \gamma v$, \emph{i.e.} $v= (\gamma-\Delta)^{-1}(u)$, which gives \eqref{eqPKS} with $\alpha=2$, $r=0$, $\beta=2$ and either \eqref{f:B1} for $\gamma =1$ (since $\Lambda R = - \nabla$) or \eqref{f:B} for $\gamma =0$ (up to lack of subtraction of mean value there, which appears on torus naturally). Thus, one obtains the parabolic-elliptic Keller-Segel model
\begin{equation}\label{cKS2}
\begin{aligned}
\pat u &= \Delta u-\chi \nabla\cdot(u \nabla v ) \\
v&= (\gamma-\Delta)^{-1}(u).
\end{aligned}
\end{equation}
In one space dimension \eqref{cKS2}  admits large-data global in time smooth solutions. In higher dimensions there are small-data or short-time smoothness results, but generally solutions may exhibit finite-time blowups for large data. Here the two-dimensional case ($d=2$) bears a special importance, since the scaling invariant Lebesgue space ${L^\frac{d}{2}}$ (for $\gamma=0$ and full space case), a reasonable choice to investigate smoothness/blowup dichotomy, coincides with the space where the quantity conserved over the evolution by  \eqref{cKS2}  itself lies ($\|u(0)\|_{L^1} = \|u(t)\|_{L^1}$)  and simultaneously with the most natural choice from the perspective of applications, \emph{i.e.} the total mass.
The related literature is abundant, so let us only mention the seminal results by J\"ager \& Luckhaus \cite{jager1992explosions} and  Nagai \cite{nagai1995blow}, the concise note by Dolbeault \& Perthame \cite{Dolbeault2}, where the threshold mass $\frac{8 \pi}{\chi}$ is easy traceable as well as  Blanchet, Carrillo \& Masmoudi \cite{BCM}, focused on the threshold mass case. Interestingly, even in this most classical case a \emph{single} quantity responsible for jointly local existence and blowup criterion is still not fully agreed upon, since for a local-in-time existence one needs to assume more than merely finiteness of the initial mass. Currently, the best candidate seems to be the scaling-invariant Morrey norm, compare Lemari\'e-Rieusset \cite{lemarie2013ks},  Biler, Cie\'slak, Karch \& Zienkiewicz \cite{biler2014local} and its references. 

Remarkably, it has been noted by A. Kiselev \& X. Xu \cite{mixingKS} and J. Bedrossian \& S. He \cite{mixingKS2} that mixing may prevent finite time singularities.  

Let us immediately clarify, in context of the eponymous {supercriticality}, that we simply call a Keller-Segel-related system in two spatial dimensions  \emph{supercritical}, if it involves a weaker dissipation than the classical one $\Delta u$.

As already mentioned, introduction of the logistic  term $r u (1-u)$ to a Keller-Segel system allows to capture cells growth. For instance, the cell-kinetics model M8 of  \cite{Hillen3}  is precisely \eqref{cKS} with added $r u (1-u)$. One of the most striking results for the classical Keller-Segel says that presence of a logistic source prevents a blowup of solutions, compare Tello \& Winkler \cite{TelloWinkler} for the interesting for us parabolic-elliptic case.

\vskip 2mm

Since 1990's, a strong theoretical and empirical evidence has appeared for replacing in Keller-Segel equations the classical diffusion with a fractional one: $\Lambda^\alpha$, $\alpha<2$, in order to model feeding strategies of a wide range of organisms.  For more details, the interested reader may consult our \cite{BG3} with its references.

Because  $ -\Lambda^\alpha u$ provides for $\alpha<2$ a weaker dissipation than the classical one, it is expected that a blowup may easily occur (in particular, for any initial mass, but this is automatic, provided the scaling applies). It is indeed the case for the generic fractional parabolic-elliptic cases, recall \cite{biler2014local, biler2009blowup, li2009finite, li2009refined, li2010wellposedness, li2010exploding, BilerWu} (Naturally there are small-data global regularity results available, based upon data in certain Lebesgue or more involved spaces). On the other hand, though, we know already that an addition of the logistic term prevents blowups in the classical case \eqref{cKS2}.

\subsection{A question and a sketch of our answer}
In context of the presented state-of-the-art it is very natural to pose the following problem
\vskip 2mm
\begin{center}
\emph{
Does the logistic term prevent blowups of solutions to \eqref{eqPKS} with $\alpha<2$?
}
\end{center}
\vskip 2mm
This paper is focused on addressing this question. Briefly, our answer reads at follows:

\vskip 2mm

\emph{
As long as a positive 
$$
\alpha>2 \left( 1- \frac{\chi}{r} \right),
$$
then \eqref{eqPKS} with $B$ given either by \eqref{f:B1} or by \eqref{f:B} has a global in time, classical solution (see Theorem \ref{th:globalstrong}). Moreover, in a wider range of $\alpha$'s, the considered problem has a global-in-time weak solution (see Theorem \ref{th:globalweak}). Namely
\begin{itemize}
\item for every $\alpha>0$, provided   $2 r \geq \chi$,
\item and for every positive  $\alpha >  4 \frac{ \chi ( \frac{\chi - 2r}{\chi - r}) - r}{{2\chi-r}}$  otherwise.
\end{itemize}
}

%\vskip 1mm

\vskip 2mm
Our result can be seen as another method (besides mixing as in A. Kiselev \& X. Xu \cite{mixingKS} and J. Bedrossian \& S. He \cite{mixingKS2}) to suppress finite time blow up inherent to the two-dimensional Keller-Segel with large masses.

In relation to our main research question, let us recall our one-dimensional studies in \cite{BG, BG4}. We have also studied the doubly-parabolic problem in \cite{BG2} and provided smoothness for the critical fractional one-dimensional case with no logistic damping, compare \cite{BG3} (see also \cite{AGM}).

\subsection{Plan of the paper} 
The next section contains needed preliminaries and definitions. In Section \ref{sec:3} we provide our main results. Further sections are devoted to their proofs.

\section{Preliminaries}
In what follows, we provide certain formulas in an arbitrary dimension $d$.
\subsection{Notation}\label{ssec:not}
We write $R_j$, $j=1, \dots, d$, for the $j-$th Riesz transform and $\Lambda^s =(-\Delta)^{s/2}$, \emph{i.e.}
\begin{equation}\label{Hfourier}
\widehat{R_j u}(\xi)=-i\frac{\xi_j}{|\xi|}\hat{u}(\xi), 
\end{equation}
\begin{equation}\label{Lfourier}
\widehat{\Lambda^s u}(\xi)=|\xi|^s\hat{u}(\xi),
\end{equation}
where $\hat{\cdot}$ denotes the usual Fourier transform. These operators have the following kernel representation, compare \cite{calderon1954singular}.
\begin{equation}\label{Rkernel}
R_if(x)= r_d \; \text{P.V.}\int_{\Td} f(x-y)\frac{y_i}{|y|^{d+1}}dy+  r_d \; \sum_{k\in \Zd, k\neq 0} \text{P.V.}\int_{\Td}f(x-y)\left(\frac{y_i-2k_i\pi}{|y-2k\pi|^{d+1}}+\frac{2k_i\pi}{|2k\pi|^{d+1}}\right)dy,
\end{equation}
with
\[
r_d = \frac{\Gamma(1+d/2)}{\pi^{\frac{d+1}{2}}}.
\]
\begin{align}\label{Lkernelalpha}
\Lambda^\alpha f(x)&=c_{\alpha, d}\bigg{(}\sum_{k\in \Zd, k\neq 0}\int_{\Td}\frac{f(x)-f(x-\eta)d\eta}{|\eta+2k\pi|^{d+\alpha}} +\text{P.V.}\int_{\Td}\frac{f(x)-f(x-\eta)d\eta}{|\eta|^{d+\alpha}}\bigg{)},
\end{align}
where
$$
c_{\alpha,d} = \frac{2^\alpha\Gamma(\frac{d+\alpha}{2})}{\pi^\frac{d}{2} |\Gamma(-\alpha/2)|}.
$$
Usually, we write 
$$
R=(R_1, \dots, R_d).
$$
Then, observe the following identity
$$
\nabla\cdot R=\Lambda.
$$
Next, we denote by $\mathcal{J}_\epsilon$ the periodic heat kernel at time $t=\epsilon$.

\subsection{Function spaces}
Let us write $\pax^n,$ $n\in\mathbb{Z}^+$, for a generic derivative of order $n$. Then, the fractional $L^p$-based Sobolev spaces (also known as Aronsztajn, Gagliardo or Slobodeckii spaces) $W^{s,p}(\Td)$ are 
$$
W^{s,p} (\Td)=\left\{f\in L^p(\Td) \; | \quad \pax^{\lfloor s\rfloor} f\in L^p(\Td), \frac{|\pax^{\lfloor s\rfloor}f(x)-\pax^{\lfloor s\rfloor}f(y)|}{|x-y|^{\frac{d}{p}+(s-\lfloor s\rfloor)}}\in L^p(\Td\times\Td)\right\},
$$
endowed with the norm
$$
\|f\|_{W^{s,p}}^p=\|f\|_{L^p}^p+\|f\|_{\dot{W}^{s,p}}^p, 
$$
$$
\|f\|_{\dot{W}^{s,p}}^p=\|\pax^{\lfloor s\rfloor} f\|^p_{L^p}+\int_{\Td}\int_{\Td}\frac{|\pax^{\lfloor s\rfloor}f(x)-\pax^{\lfloor s\rfloor}f(y)|^p}{|x-y|^{d+(s-\lfloor s \rfloor)p}}dxdy.
$$
In the case $p=2$, we write $H^s(\Td)=W^{s,2}(\Td)$ for the standard non-homogeneous Sobolev space with its norm
$$
\|f\|_{H^s}^2=\|f\|_{L^2}^2+\|f\|_{\dot{H}^s}^2, \quad \|f\|_{\dot{H}^s}=\|\Lambda^s f\|_{L^2}.
$$ 
\vskip 2mm
Next, for $s\in (0,1)$, let us denote the usual H\"older spaces as follows
$$
C^{s} (\Td)=\left\{f\in C(\Td) \;| \quad \frac{|f(x)-f(y)|}{|x-y|^{s}}\in L^\infty(\Td\times\Td)\right\},
$$
with the norm
$$
\|f\|_{C^{s}}=\|f\|_{L^\infty}+\|f\|_{\dot{C}^{s}},\quad
\|f\|_{\dot{C}^{s}}=\sup_{(x,y)\in\Td\times\Td}\frac{|f(x)-f(y)|}{|x-y|^{s}}.
$$
By $\mathcal{D}$ we mean smooth test functions.  For brevity, the domain dependance of a function space will be generally suppressed. Finally, we will use the standard notation for evolutionary (Bochner) spaces, writing $L^p(0,T; W^{s,p})$ etc.
\subsection{Weak solutions}
We adopt the following standard

\begin{defi}\label{def:weaksol}$u \in L^2(0,T;L^2)$ is a global weak solution of \eqref{eqPKS} emanating from $u_0 \in L^2$ iff for all $T>0$ and $\phi\in \mathcal{D}([-1,T)\times \TT)$ it holds
$$
\int_0^T\int_{\TT} u(-\pat \phi+\Lambda^\alpha\phi)+uB(u) \nabla \phi-f(u)\phi  \;dxds = \int_{\TT} u_0\phi(0) \;dx.
$$
\end{defi}

\section{Main results}\label{sec:3}

For clarity, let us introduce 
\[ [k]_+ = \begin{cases} k & \text{ for } 0 <k < \infty,\\
\text{an arbitrary finite number} & \text{ otherwise } (k <0 \text{ or } \infty)  \end{cases}
\]

Let us emphasize that when $k<0$ or $k=\infty$, $[k]_+$ can be chosen as large as required (see \eqref{equnif3} below).

%Let us specify first the allowed forms of operator $B$ and function $f$ in \eqref{eqDD}
%\begin{ass}[Form of $B$]\label{assB}
%$B$ is given either by \eqref{f:B1} or by \eqref{f:B}
%\end{ass}

%\begin{ass}[Form of $f$]\label{assf}
%\[
%f(y) = ry (1-y)
%\]
%There exists $K$ such that for any $s>K$ holds
%\[
%f (s) \le - c s^{1+\alpha}
%\]
%with a $c>0$ and $\alpha>0$
%\end{ass}

\begin{teo}[Global-in-time weak solutions]\label{th:globalweak}
Let the active scalar relation in problem  \eqref{eqPKS} be given either by \eqref{f:B1} or by \eqref{f:B}.
Assume that $0<\alpha<2$ and $\chi,\beta,r>0$. Let $u_0\geq0$, $u_0\in L^{2}$ be an initial datum and let $T$ be any positive number. \\

(i) Case $2 r \geq \chi$: for every $0<\alpha<2$ there exists a global in time weak solution $u$  to \eqref{eqPKS}, in the sense of Definition \ref{def:weaksol}, such that
\[
\|u (t)\|_{ L^\infty (0,T;L^{1}) }\leq \max\{\|u_0\|_{L^1},4\pi^2\}.
\]
It enjoys the following extra regularity
$$
u\in L^2(0,T;H^{\alpha/2}) \cap L^\infty (0,T;L^{2}) \cap L^{2+2s} (0,T;W^{\left(\frac{\alpha}{2+2s}\right)^{-}, 1+s})  \cap L^m (0,T;L^{m})
$$
for any $0\le s \le 1$. There, $m=3$ for $2 r > \chi$ or any $m < 3$ for $2 r = \chi$. Moreover, if $u_0\in L^{p}$ for a $p \in (2, {[ \frac{\chi}{\chi - r}]_+}]$, then 
$$
u \in L^\infty (0,T;L^{p})  \cap L^{1+p^-} (0,T;L^{1+ p^-}) \quad \text{ and } \quad u^\frac{p}{2} \in L^2(0,T;H^{\alpha/2}).
$$ \\
(ii) Case $2r < \chi$: Assume that
\begin{equation}\label{eqhardr}
 \alpha >  4 \frac{ \chi ( \frac{\chi - 2r}{\chi - r}) - r}{{2\chi-r}}.
\end{equation}
There exists a global in time weak solution $u$  to \eqref{eqPKS}, in the sense of Definition \ref{def:weaksol}, such that
\[
\|u (t)\|_{ L^\infty (0,T;L^{1}) }\leq \max\{\|u_0\|_{L^1},4\pi^2\}.
\]
It enjoys the following extra regularity
$$
u \in L^\infty (0,T;L^{p_0})  \cap L^{1+p_0^-} (0,T;L^{1+ p_0^-}) \cap L^{2+2s} (0,T;W^{\left(\frac{\alpha}{2+2s}\right)^{-}, 1+s})  \quad \text{ and } \quad u^\frac{p_0}{2} \in L^2(0,T;H^{\alpha/2}).
$$
for any $0\le s \le  \min(1,  \frac{r}{\chi - r})$ and where $p_0 ={ \frac{\chi}{\chi - r}}$ ($<2$ in this case).
\end{teo}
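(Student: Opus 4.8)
The plan is to construct the weak solution as a limit of smooth solutions of a regularized version of \eqref{eqPKS}, controlling every nonlinearity by a hierarchy of $L^p$ energy estimates in which the logistic damping $-ru^2$ is played against the aggregation. Concretely, I would first replace \eqref{eqPKS} by a family of approximate problems (mollifying the drift, e.g. $u_nB(u_n)\mapsto\mathcal{J}_{\epsilon_n}*(u_nB(u_n))$, and/or adding a vanishing regularizing term), for which global smooth solutions exist by standard parabolic theory. On this level two structural facts must be recorded and shown to survive the limit. First, nonnegativity $u_n\ge0$, preserved because the fractional heat semigroup and the transport part are positivity preserving and the logistic reaction vanishes at $u=0$. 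Second, the mass bound: integrating the equation over $\TT$ and using $\int_\TT\Lambda^\alpha u=\int_\TT\nabla\cdot(\cdots)=0$ together with $\int_\TT u^2\ge\frac{1}{4\pi^2}\left(\int_\TT u\right)^2$ turns the logistic term into a scalar logistic inequality $M'\le rM(1-M/4\pi^2)$ for $M(t)=\|u_n(t)\|_{L^1}$, which immediately yields the stated uniform bound $\|u_n\|_{L^\infty(0,T;L^1)}\le\max\{\|u_0\|_{L^1},4\pi^2\}$.

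Next come the $L^p$ estimates, which are the heart of the matter. Testing with $u^{p-1}$, the dissipation contributes a good term bounded below, via the Stroock--Varopoulos / C\'ordoba--C\'ordoba inequality, by $c\|u^{p/2}\|_{\dot H^{\alpha/2}}^2$. Integrating the drift by parts and using $u^{p-1}\nabla u=\frac1p\nabla(u^p)$ converts it into $\frac{\chi(p-1)}{p}\int_\TT u^p\,\nabla\cdot B(u)$; since $\nabla\cdot B(u)=u-\langle u\rangle$ for \eqref{f:B} and $\nabla\cdot B(u)=u-(1+\Lambda^\beta)^{-1}u$ for \eqref{f:B1}, its leading part is $\frac{\chi(p-1)}{p}\int_\TT u^{p+1}$, the lower-order remainder being harmless because $(1+\Lambda^\beta)^{-1}$ and the mean are bounded smoothing operations. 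The logistic term yields $-r\int_\TT u^{p+1}$, so the dangerous highest-order contribution carries the coefficient $\frac{\chi(p-1)}{p}-r$, which is nonpositive exactly when $p\le p_0:=\frac{\chi}{\chi-r}$. This is the arithmetic behind the two cases: when $2r\ge\chi$ one has $p_0\ge2$, so $L^2$ (and every $L^p$ up to $p_0$) closes directly with a genuine gain $-c\int u^{p+1}$, producing the damping bound $u\in L^m(L^m)$; when $2r<\chi$ one has $p_0<2$ and only the critical estimate at $p=p_0$ survives, where the bad term is exactly cancelled and no margin is left.

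The delicate part is therefore case (ii), where I must manufacture the $L^2(0,T;L^2)$ regularity demanded by Definition~\ref{def:weaksol} out of the two surviving pieces of information, $u\in L^\infty(0,T;L^{p_0})$ and $u^{p_0/2}\in L^2(0,T;H^{\alpha/2})$, with $p_0<2$. The strategy is to first upgrade the dissipation bound into genuine fractional regularity of $u$ itself --- converting control of $\Lambda^{\alpha/2}(u^{p/2})$ into membership of $u$ in the Slobodeckii spaces $W^{\left(\frac{\alpha}{2+2s}\right)^{-},1+s}$ through a nonlinear Gagliardo-type embedding, which produces the whole family $u\in L^{2+2s}(0,T;W^{\left(\frac{\alpha}{2+2s}\right)^{-},1+s})$ --- and then to interpolate this scale of space--time estimates against $u\in L^\infty(L^{p_0})$, optimizing over the admissible parameters (with $s$ up to $\min(1,r/(\chi-r))$) so as to land exactly in $L^2(0,T;L^2)$. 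Condition \eqref{eqhardr} is precisely the threshold at which this optimized interpolation closes; I expect the bookkeeping of exponents here, rather than any single inequality, to be the main obstacle, and the sharp constant on the right-hand side of \eqref{eqhardr} to emerge from it.

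Finally, with uniform bounds in hand I would pass to the limit. The estimates above, together with the equation read as an identity for $\pat u_n$, bound $\pat u_n$ in a negative-order Bochner space, so that the Aubin--Lions--Simon lemma furnishes a subsequence converging strongly in $L^2(0,T;L^2)$ (this is exactly where the $L^2(L^2)$ bound secured in case (ii) is indispensable). Strong convergence disposes of the quadratic reaction $u_n^2$ and of the drift $u_nB(u_n)$; for the latter one also uses that $B$ is an operator of non-positive order (indeed order $-1$), hence continuous on the relevant Lebesgue spaces, so that combining strong convergence of one factor with the weak bound on the other gives $u_nB(u_n)\rightharpoonup uB(u)$ tested against $\nabla\phi$. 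Passage to the limit in the remaining linear terms is immediate, the limit $u$ satisfies the weak formulation of Definition~\ref{def:weaksol}, and the asserted regularity is inherited by lower semicontinuity of the norms.
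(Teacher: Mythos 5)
Your skeleton --- vanishing regularization, the logistic mass ODE giving $\|u\|_{L^\infty(0,T;L^1)}\le\max\{\|u_0\|_{L^1},4\pi^2\}$, the $L^p$ hierarchy with threshold $p_0=\frac{\chi}{\chi-r}$ via Stroock--Varopoulos, the Slobodeckii-scale bounds, and Aubin--Lions --- is exactly the paper's architecture, and your case (i) is essentially the paper's proof. In case (ii), however, there is a genuine gap. A preliminary point: you overlooked that $u\in L^2(0,T;L^2)$ is free in \emph{all} cases. Integrating \eqref{eqPKS} over $\TT$ gives $\frac{d}{dt}\|u\|_{L^1}=r\|u\|_{L^1}-r\|u\|_{L^2}^2$, so the logistic term itself yields $r\int_0^T\|u\|_{L^2}^2\,dt\le \|u_0\|_{L^1}+rT\max\{\|u_0\|_{L^1},4\pi^2\}$ (the paper's \eqref{aee2}), with no restriction on $\alpha,r,\chi$. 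Hence membership in $L^2(L^2)$, needed for Definition \ref{def:weaksol} and for bounding $\pat u_\epsilon$, never has to be ``manufactured'' by interpolation.

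The real obstruction is that what case (ii) requires is not membership but a mechanism to pass to the limit in the quadratic term $u_\epsilon^2$, and your route cannot reach the claimed range \eqref{eqhardr}. You propose strong compactness in $L^2(0,T;L^2)$ by interpolating the Slobodeckii bounds against $L^\infty(0,T;L^{p_0})$. Since $p_0<2$, any interpolation landing in spatial $L^2$ (equivalently, any Aubin--Lions argument compact into $L^2$) forces the Sobolev exponent of $W^{(\alpha/(2+2s))^-,1+s}$ to exceed $2$, i.e. $\frac{4+4s}{4-\alpha}>2$, i.e. $\alpha>2-2s$; at the optimal $s_0=\frac{r}{\chi-r}$ this reads $\alpha>2\,\frac{\chi-2r}{\chi-r}$, which is \emph{strictly stronger} than \eqref{eqhardr}: writing $A=\frac{\chi-2r}{\chi-r}<1$, one checks $\frac{4(\chi A-r)}{2\chi-r}<2A$ because $A<2$. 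So your claim that \eqref{eqhardr} is ``precisely the threshold at which this optimized interpolation closes'' is incorrect, and a nonempty interval of admissible $\alpha$'s is lost. Moreover, your assertion that at the critical exponent ``no margin is left'' discards exactly the ingredient that saves the paper: for $p$ strictly below $p_0$ the coefficient $r(s+1)-\chi s$ is positive, which gives the space-time bound $u\in L^{1+p_0^-}(0,T;L^{1+p_0^-})$ (the paper's \eqref{aee4}) --- itself part of the theorem's stated conclusion, so your argument also fails to prove the full regularity claim. The paper then obtains strong convergence only in $L^{2+2s}(0,T;L^\xi)$ with $\xi<\frac{4+4s}{4-\alpha}$ \emph{allowed to be below} $2$, and treats $\int(u_\epsilon^2-u^2)\phi$ by H\"older, pairing this strong convergence against boundedness of $u_\epsilon+u$ in $L^{\xi'}$; the compatibility requirement $\xi'<1+p_0$ is exactly \eqref{eqhardr}. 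Without retaining the sub-critical bound and this duality pairing, your limit passage does not close on the stated range.
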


\vskip 2mm
In a slightly narrower parameter range for the case $2r < \chi$, but still well into the supercritical regime $\alpha < 2$,  we obtain our central result
\begin{teo}[Global-in-time smooth solutions]\label{th:globalstrong}
Let the active scalar relation in problem \eqref{eqPKS} be given either by \eqref{f:B1} or by \eqref{f:B}.
Take any $0<\alpha<2$ and positive $\chi,\beta,r$. Let $u_0\geq0$, $u_0\in H^{k}$ with  $k > 1$, $k\in\mathbb{Z}^+,$ be an initial datum and let $T$ be any positive number. Assume that
\begin{equation}\label{eq:smooth}
\alpha>\max{ \left( 2 - \frac{2r}{\chi}, 0 \right) }.
\end{equation}
Then, the problem \eqref{eqPKS} admits a global in time classical solution,
$$
u\in C([0,T); H^k(\TT))
$$
that satisfies the bound
\begin{equation}\label{eq:infB}
\|u\|_{L^\infty(0,T;L^\infty(\TT))}\leq F(T,\|u_0\|_{L^{1}(\TT)},\|u_0\|_{L^\infty(\TT)},\alpha, r,\chi)
\end{equation}
for a nondecreasing $F$, finite for finite arguments.

Moreover, for $k > 3$, $k\in\mathbb{Z}^+,$ we have $ u \in C^{2,1} (\TT \times [0,T) )$ . If additionally $\alpha>1$, then $u$ is real analytic.
\end{teo}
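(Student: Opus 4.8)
The plan is to reduce the global existence to a single quantitative $L^\infty$ bound, and to obtain that bound from a staged $L^p$ estimate in which the logistic damping and the fractional dissipation are used at different integrability levels. First I would establish local-in-time classical solutions in $H^k$ by a regularization scheme (mollifying the nonlinearity and/or adding a vanishing viscosity, solving the regularized problems, and passing to the limit with estimates uniform in the regularization parameter), together with a continuation criterion of the schematic form $\frac{d}{dt}\|u\|_{H^k}^2 + \|u\|_{\dot H^{k+\alpha/2}}^2 \le P(\|u\|_{L^\infty})\|u\|_{H^k}^2$. This estimate comes from standard commutator and product bounds for the drift $\chi\nabla\cdot(uB(u))$, using that both representations \eqref{f:B1} and \eqref{f:B} give an operator $B$ of order $-1$ (the symbol decays like $|\xi|^{-1}$), so the nonlinearity is essentially a first-order quadratic term that is absorbed by the dissipation once $u$ is bounded. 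Consequently everything hinges on \eqref{eq:infB}. Along the way I would record that $u_0\ge 0$ propagates to $u\ge 0$ (a minimum principle: at a putative negative minimum $\Lambda^\alpha u<0$, and the reaction $ru(1-u)$ does not push $u$ below $0$), and that testing \eqref{eqPKS} against $1$ gives the logistic ODE $\frac{d}{dt}\int u = r\int u - r\int u^2 \le r\int u - \frac{r}{4\pi^2}\big(\int u\big)^2$, hence the mass bound $\|u\|_{L^1}\le\max\{\|u_0\|_{L^1},4\pi^2\}$.

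The core is the $L^p$ estimate. Testing \eqref{eqPKS} with $u^{p-1}$, using the standard lower bound $\int u^{p-1}\Lambda^\alpha u \ge c_p\|u^{p/2}\|_{\dot H^{\alpha/2}}^2$ and the key algebraic fact $\nabla\cdot B(u)\le u$ (for \eqref{f:B}, $\nabla\cdot B(u)=u-\langle u\rangle\le u$; for \eqref{f:B1}, $\nabla\cdot B(u)=u-(1+\Lambda^\beta)^{-1}u\le u$, since $(1+\Lambda^\beta)^{-1}u\ge 0$ for $u\ge 0$), yields
\[
\frac1p\frac{d}{dt}\|u\|_{L^p}^p + c_p\|u^{p/2}\|_{\dot H^{\alpha/2}}^2 \le \Big(\chi\tfrac{p-1}{p}-r\Big)\int u^{p+1} + r\|u\|_{L^p}^p .
\]
The coefficient $\chi\frac{p-1}{p}-r$ is negative precisely for $p< p_0:=\frac{\chi}{\chi-r}$ and vanishes at $p_0$ (and is negative for all finite $p$ when $r\ge\chi$). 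Thus for every $p\le p_0$ the cubic term is a damping; combined with the mass bound this gives $\|u\|_{L^p}$ finite on $(0,T)$ for all such $p$.

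Now I would bootstrap, and this is where \eqref{eq:smooth} enters: rewritten as $p_0=\frac{\chi}{\chi-r}>\frac2\alpha$, it leaves room to fix an exponent $p'\in(\tfrac2\alpha,p_0]$ whose norm we already control. For arbitrary $p\ge p'$ I would estimate $\int u^{p+1}$ by Gagliardo--Nirenberg, interpolating between the diffusion-gained space $L^{2p/(2-\alpha)}$ (since $\|u^{p/2}\|_{\dot H^{\alpha/2}}^2$ dominates $\|u\|_{L^{2p/(2-\alpha)}}^p$ by $2$D Sobolev embedding) and the controlled space $L^{p'}$. A threshold computation shows the power of the dissipation norm is strictly below $1$ exactly because $p'>\frac2\alpha$, so Young's inequality absorbs it into $c_p\|u^{p/2}\|_{\dot H^{\alpha/2}}^2$, while the remaining factor is a fixed power of the bounded norm $\|u\|_{L^{p'}}$, i.e. a genuinely bounded forcing rather than a superlinear function of $\|u\|_{L^p}$. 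The resulting differential inequality is at most linear in $\|u\|_{L^p}^p$, hence $\|u\|_{L^\infty(0,T;L^p)}\le C_p$ for every finite $p$. Tracking these constants through a Moser--Alikakos iteration (the geometric gain $\frac{2}{2-\alpha}>1$ in integrability at each step beats the growth of $C_p$), or equivalently feeding the gained integrability into the Duhamel formula via the smoothing of $e^{-t\Lambda^\alpha}\nabla\cdot$, upgrades this to \eqref{eq:infB}. The continuation criterion then closes global $H^k$ existence; for $k>3$, Sobolev embedding gives $C^{2,1}$, and for $\alpha>1$ analyticity follows from a Gevrey-class estimate in which the order-$\alpha$ dissipation dominates the order-one transport.

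The main obstacle is the bootstrap, and precisely the interplay of three effects in the displayed estimate. For $\alpha<2$ the cubic term $\int u^{p+1}$ cannot be absorbed into the dissipation alone (the natural interpolation exponent would require $\alpha\ge 2$), and once $\chi>r$ it cannot be defeated by the logistic damping alone either, since that damping carries a strictly weaker superlinear exponent. The resolution — and the source of the threshold $\alpha>2-\tfrac{2r}{\chi}$ — is to use the logistic term first, at the moderate level $p\le p_0$, to manufacture a \emph{bounded} norm $L^{p'}$ with $p'>\tfrac2\alpha$, and only then to use the dissipation to propagate that bound to all $p$. Checking that $p'$ can indeed be chosen in $(\tfrac2\alpha,p_0]$ is exactly the content of \eqref{eq:smooth}, and ensuring that the forcing produced in the bootstrap is truly bounded rather than superlinear is the delicate point requiring care.
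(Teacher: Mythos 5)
Up to and including the uniform-in-time $L^p$ bounds, your outline coincides with the paper's own preparation: positivity, the mass ODE, the $L^p$ estimate obtained by testing with $u^{p-1}$, the inequality $\nabla\cdot B(u)\le u$ for both \eqref{f:B1} and \eqref{f:B}, the damping condition $p\le p_0=\frac{\chi}{\chi-r}$, and the bootstrap interpolating $\int u^{p+1}$ between the Sobolev-gained Lebesgue space and a fixed $L^{p'}$ with $p'\in(\frac{2}{\alpha},p_0]$ are precisely Lemma \ref{Lemmanorms} and Lemma \ref{Lemmanorms2} of the paper, and your reformulation of \eqref{eq:smooth} as $p_0>\frac{2}{\alpha}$ is exactly the condition appearing in \eqref{eq:enerd3b}.

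The genuine gap is the decisive last step, from $u\in L^\infty(0,T;L^p)$ for all finite $p$ to the uniform bound \eqref{eq:infB}; neither of your two mechanisms is justified, and one is false on part of the claimed range. First, the Duhamel route: the smoothing estimate $\|e^{-t\Lambda^\alpha}\nabla\cdot f\|_{L^\infty}\lesssim t^{-\frac{1}{\alpha}-\frac{2}{\alpha p}}\|f\|_{L^p}$ has a non-integrable time singularity for \emph{every} $p$ as soon as $\alpha\le 1$, while \eqref{eq:smooth} admits $\alpha\le 1$ whenever $\chi<2r$ (indeed all $\alpha\in(0,2)$ when $r\ge\chi$); so ``equivalently feeding into Duhamel'' cannot cover the theorem. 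Second, the Moser--Alikakos route: your sketch conflates two estimates that cannot hold simultaneously. Interpolating against the \emph{fixed} $L^{p'}$ --- which is what produces your ``genuinely bounded forcing'' and the linear differential inequality --- makes the Young exponent degenerate as $p\to\infty$ (one computes $q-1\sim\frac{p'\alpha-2}{2p}$), so the constant $C_p$ grows super-exponentially and $C_p^{1/p}\to\infty$: each finite $p$ is fine, but the bounds do not survive the limit $p\to\infty$. To tame the constants one must instead interpolate against the \emph{moving} norm $L^{p_k}$, $p_{k+1}=2p_k$, and then the one-step estimate in norm form reads $M_{k+1}\lesssim C_k M_k^{\theta_k}$ with $\theta_k=\gamma_k/p_{k+1}$ strictly greater than $1$ --- exactly the obstruction the paper flags in the remark after Lemma \ref{Lemmanorms2} (``the power $\gamma/(s+1)$ exceeds $1$''). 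A superlinear exponent breaks the textbook Alikakos scheme; rescuing it would require proving both that $\prod_k\theta_k<\infty$ along the iteration (here $\theta_k-1\sim\frac{\alpha+2}{2\alpha p_k}$, so this is plausible but must be verified) and that $C_k$ stays polynomial in $p_k$. Your single sentence claiming the geometric gain ``beats the growth of $C_p$'' is the entire missing proof.

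The paper avoids this difficulty with a different tool, a pointwise argument: writing $\bar u(t)=\max_x u(x,t)=u(x_t^*,t)$, equation \eqref{eqPKS} yields $\frac{d}{dt}\bar u+\Lambda^\alpha u(x_t^*,t)\le\chi\bar u^2+r\bar u(1-\bar u)$, and a nonlinear maximum principle (Theorem \ref{th:max}, built on the H\"older norm of a double primitive of $u$, or its $L^p$ analogue from \cite{Gsemiconductor} in the intermediate case $r<\chi\le 2r$) bounds $\Lambda^\alpha u(x_t^*,t)$ from below by $c\,\bar u^{1+\alpha p_0/2}\big/\|u\|_{L^{p_0}}^{1+\alpha p_0/2}$, whose denominator is controlled by \eqref{aee3} or by Lemma \ref{Lemmanorms2}. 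The resulting ODE $\frac{d}{dt}\bar u+\delta_1\bar u^{1+\alpha p_0/2}\le\chi\bar u^2+r\bar u$ closes precisely because $1+\frac{\alpha p_0}{2}>2$ is equivalent to \eqref{eq:smooth}. So your energy scheme and the paper locate the threshold for the same structural reason, but your proposal lacks the device (or the completed Moser computation) that actually converts the family of $L^p$ bounds into the $L^\infty$ bound on which the whole theorem rests.
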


\begin{remark} Theorem \ref{th:globalstrong} remains valid in the case of the spatial domain being $\mathbb{R}^2$. The proof is analogous, up to minor modifications.

\end{remark}

\section{Auxiliary results}
First we provide  for \eqref{eqPKS} a short-time smoothness result with a continuation criterion. Then, we recall the Stroock-Varopulous inequality and prove entropy estimates. Furthermore, needed energy estimates are given. Eventually, a nonlinear maximum principle for $\Lambda^\alpha$ is presented, which is needed for our proof of Theorem \ref{th:globalstrong}.
\subsection{Short-time smoothness and continuation criterion}

\begin{lem}\label{localexistence} Let $u_0\in H^4$ be a non-negative initial data. Then, if $0 < \alpha\leq 2$, there exists a time $0<T_{max}(u_0)\leq \infty$ such that there exists a non-negative solution 
$$
u \in C([0,T_{max}(u_0)],H^4) \quad \cap \quad C^{2,1} (\TT \times [0,T_{max}(u_0) ) ) 
$$
to the equation \eqref{eqPKS} with $B$ given by \eqref{f:B1} or by \eqref{f:B}. Moreover, if for a given $T$ the solution verifies the following bound
\begin{equation}\label{eq:contC}
\int_0^T \|u(s)\|_{L^\infty}ds<\infty,
\end{equation}
then it may be extended up to time $T+\delta$ for small enough $0<\delta$. Furthermore, under the restriction $1\leq \alpha $, the solution becomes real analytic. 
\end{lem}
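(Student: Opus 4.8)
The plan is to produce the solution by a regularization-and-compactness argument whose engine is a self-improving $H^4$ energy estimate, and to read off both the continuation criterion and the analyticity from refinements of that estimate. The starting observation is that for both \eqref{f:B1} and \eqref{f:B} the field $B$ is a smoothing operator of order $-1$, while $\na\cdot B$ is of order $0$: for \eqref{f:B} one has $B(u)=\na\Delta^{-1}(u-\langle u\rangle)$ and $\na\cdot B(u)=u-\langle u\rangle$, and for \eqref{f:B1} the same holds since $\na\cdot R=\Lambda$. I would first solve a regularized problem (vanishing viscosity $-\varepsilon\Delta$ together with a spectral Galerkin truncation), globally solvable by ODE theory mode-by-mode. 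Applying $\Lambda^4$ to \eqref{eqPKS}, pairing with $\Lambda^4 u$, and using the fractional Leibniz rule of Kato--Ponce type together with the order $-1$ smoothing of $B$, I expect a differential inequality $\frac{d}{dt}\|u\|_{H^4}^2+\|u\|_{\dot H^{4+\alpha/2}}^2\le C\,P(\|u\|_{H^4})$ with $P$ polynomial and $C$ independent of the regularization. This yields a regularization-free existence time $T_{max}(u_0)>0$ and uniform bounds, so that Aubin--Lions compactness produces a limit solving \eqref{eqPKS} with $u\in C([0,T_{max}],H^4)$; uniqueness follows by an energy estimate on the difference of two solutions. Non-negativity I would get from a minimum principle: at a minimizing point $x_t$ of $u(\cdot,t)$ one has $\na u=0$ and $\Lambda^\alpha u\le 0$, so \eqref{eqPKS} reduces to $\frac{d}{dt}\min_x u\ge g(t)\min_x u$ with $g$ bounded on $[0,T_{max})$, and Gronwall propagates $u_0\ge 0$; here it is crucial that $f(u)=ru(1-u)$ vanishes at $u=0$.

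The continuation criterion is the technical core and, I expect, the main obstacle. The aim is to replace the crude polynomial $P$ by a bound in which only $\|u\|_{L^\infty}$ multiplies $\|u\|_{H^4}^2$, for then \eqref{eq:contC} keeps $\|u(t)\|_{H^4}$ finite via Gronwall and one may restart local existence beyond $T$. After commuting $\Lambda^4$ through the drift, the harmless contributions are those controlled by $\na\cdot B(u)=u-\langle u\rangle$ and by the reaction term $-ru^2$, each bounded by $\|u\|_{L^\infty}\|u\|_{H^4}^2$. The dangerous pieces carry either an extra half-derivative past $H^4$ or a factor $\|\na u\|_{L^\infty}$; these I would interpolate, by Gagliardo--Nirenberg, between $\|u\|_{L^\infty}$ and the dissipation-generated norm $\|u\|_{\dot H^{4+\alpha/2}}$, arranging the interpolation so that the high-regularity factor is absorbed into $\|u\|_{\dot H^{4+\alpha/2}}^2$ on the left and only $\|u\|_{L^\infty}$ survives as the Gronwall rate. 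Verifying that the resulting interpolation exponents remain subcritical for every $\alpha\in(0,2]$, in particular for small $\alpha$ where the dissipation is weakest, is the delicate point on which the whole lemma hinges.

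The remaining assertions are softer. The pointwise regularity $u\in C^{2,1}(\TT\times[0,T_{max}))$ follows from the embedding $H^4(\TT)\hookrightarrow C^{2,\gamma}$ and from reading $\pat u$ off \eqref{eqPKS}, whose right-hand side is continuous in time with values in $C^0$. For $\alpha\ge 1$ I would establish analyticity by the Gevrey-norm (Foias--Temam) method: propagate a bound of the form $\|e^{\delta(t)\Lambda}u(t)\|_{H^4}$ for a suitable radius $\delta(t)>0$, the estimate closing precisely because, $B$ being of order $-1$, the drift $\na\cdot(uB(u))$ is effectively of first order, and $\Lambda^\alpha$ with $\alpha\ge 1$ dominates it together with the extra derivative produced by commuting $e^{\delta\Lambda}$ through the nonlinearity. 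Exponential decay of the Fourier coefficients, equivalently analyticity, then follows, and the threshold $\alpha=1$ is exactly the balance between order-one transport and the dissipation.
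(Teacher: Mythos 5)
The paper itself does not prove this lemma: it delegates the whole argument to \cite{AGM} in one sentence, so your proposal can only be measured against the standard argument that reference carries out (regularization, $H^4$ energy estimates with commutators, a BKM-type continuation criterion, exponentially weighted Fourier norms for analyticity). Your architecture --- Galerkin/vanishing viscosity, an $H^4$ energy inequality closed by Kato--Ponce and the order $-1$ smoothing of $B$, the minimum principle for non-negativity (using $\Lambda^\alpha u\le 0$ at a spatial minimum and $f(0)=0$), Aubin--Lions, the embedding $H^4(\TT)\hookrightarrow C^{2,\gamma}$ for the $C^{2,1}$ claim, and Gevrey (Foias--Temam) norms with threshold $\alpha\ge 1$ for analyticity --- is exactly this standard route, and those parts are sound.

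The genuine gap is in the continuation criterion, which you yourself single out as the core. Your plan is to make every dangerous term close with Gronwall rate exactly $\|u\|_{L^\infty}$ by Gagliardo--Nirenberg interpolation against the dissipation $\|u\|_{\dot H^{4+\alpha/2}}$. This cannot work as stated, because the commutator term $\|\nabla B(u)\|_{L^\infty}\|u\|_{\dot H^4}^2$ is not reducible to $\|u\|_{L^\infty}\|u\|_{\dot H^4}^2$: the operator $\nabla B$ is of Calder\'on--Zygmund type (order zero), and such operators are \emph{not} bounded on $L^\infty$. Any Lebesgue-space workaround --- say $\|\nabla B(u)\|_{L^p}\le C_p\|u\|_{L^p}$ with large $p<\infty$, Sobolev embedding $\|\pax^4 u\|_{L^{q}}\lesssim\|\Lambda^{4+2/p}u\|_{L^2}$, interpolation with $\|\Lambda^{4+\alpha/2}u\|_{L^2}$ and Young --- produces a Gronwall rate $\|u\|_{L^\infty}^{2/(2-\theta)}$ with $\theta=4/(p\alpha)>0$; the exponent is strictly larger than $1$ (and $C_p\to\infty$ forbids letting $p\to\infty$), so this scheme only yields continuation under $\int_0^T\|u(s)\|_{L^\infty}^{1+\epsilon}ds<\infty$, which is weaker than \eqref{eq:contC}. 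To reach exponent exactly one you need the logarithmic Kato--Ponce inequality $\|\nabla B(u)\|_{L^\infty}\lesssim \|u\|_{L^2}+\|u\|_{L^\infty}\max\{1,\log(c\|u\|_{H^4})\}$ --- precisely the inequality the paper quotes from \cite{kato1986well} in Section \ref{ssec:low} --- followed by a Beale--Kato--Majda-type double-exponential Gronwall argument: from $\frac{d}{dt}Y\le C(1+\|u\|_{L^\infty})\,Y\log(e+Y)$ with $Y=\|u\|_{H^4}^2$, finiteness of $\int_0^T\|u\|_{L^\infty}ds$ still gives $\sup_{[0,T]}Y<\infty$. With this replacement your proof closes; note that for the use the paper makes of the lemma (where $\sup_{t\le T}\|u(t)\|_{L^\infty}$ is shown to be finite) even your weaker criterion would suffice, but the lemma as stated requires the logarithmic argument.
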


\begin{proof}
The proof is similar to the one in \cite{AGM}. 
\end{proof}

\subsection{Entropy estimates}
\begin{lem}\label{lemaentropy2}
Let $0<s$,  \;$0<\alpha<2$, \; $0<\delta<\alpha/(2+2s)$ and $d \ge 1$. Then for a sufficiently smooth $u \ge 0$ ($u \in L^{\infty} (\Td) \cap H^\alpha (\Td)$ is enough) it holds
\begin{equation}\label{ene:1}
\frac{4s}{(1+s)^2}  \int_\Td  | \Lambda^\frac{\alpha}{2} (u^\frac{s+1}{2}) |^2dx \le \int_{\Td}\Lambda^\alpha u(x) u^s(x)dx.
\end{equation}
If additionally $s \le 1$, then
\begin{equation}\label{ene:2}
\|u\|_{\dot{W}^{\alpha/(2+2s)-\delta,1+s} (\Td)}^{2+2s}\leq C(\alpha,s,\delta, d)\|u\|_{L^{1+s} (\Td) }^{1+s} \int_{\Td}\Lambda^\alpha u(x) u^s(x)dx.
\end{equation}

Similarly, let $0<\alpha<2$, $0<\delta<\alpha/2$ and $d \ge 1$. Then for a sufficiently smooth $u \ge 0$ ($u \in H^\alpha (\Td)$ is enough) it holds
\begin{equation}\label{ene:3}
\|u\|_{\dot{W}^{\alpha/2-\delta,1}}^2\leq C(\alpha,d,\delta)\|u\|_{L^1}\int_{\Td}\Lambda^\alpha u(x)\log(u(x))dx.
\end{equation}
\end{lem}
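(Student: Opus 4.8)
The plan is to establish each inequality by exploiting the pointwise convexity/monotonicity structure hidden in the singular-integral representation \eqref{Lkernelalpha} of $\Lambda^\alpha$. For \eqref{ene:1}, I would start from the kernel formula, writing
$$
\int_{\Td}\Lambda^\alpha u(x)\,u^s(x)\,dx = \frac{c_{\alpha,d}}{2}\int_{\Td}\int\frac{\big(u(x)-u(y)\big)\big(u^s(x)-u^s(y)\big)}{|x-y|^{d+\alpha}}\,dy\,dx
$$
(after symmetrizing in $x,y$ and folding the periodic sum over $k$ into the integral over $\Td\times\Td$). The left-hand side of \eqref{ene:1} admits the analogous bilinear form with integrand $\big(u^{(s+1)/2}(x)-u^{(s+1)/2}(y)\big)^2$ up to the constant $\frac{4s}{(1+s)^2}$ absorbed by the chain rule $\Lambda^{\alpha/2}$-Dirichlet form identity. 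Hence \eqref{ene:1} reduces to the elementary pointwise inequality
$$
\frac{4s}{(1+s)^2}\big(a^{\frac{s+1}{2}}-b^{\frac{s+1}{2}}\big)^2 \le (a-b)(a^s-b^s)\qquad\text{for all }a,b\ge 0,
$$
which is a form of the Stroock--Varopoulos inequality and follows by an explicit one-variable estimate (say, reducing to $b=1$ and checking the single-variable inequality in $t=a/b$). This is the cleanest step and carries no real obstacle.

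For \eqref{ene:2}, the strategy is to bound a Gagliardo seminorm of $u$ itself by the quantity already controlled in \eqref{ene:1}. I would first note that the right-hand side of \eqref{ene:1} equals (up to constants) $\|\Lambda^{\alpha/2}(u^{(s+1)/2})\|_{L^2}^2 = \|u^{(s+1)/2}\|_{\dot H^{\alpha/2}}^2$, i.e. control of the fractional $H^{\alpha/2}$-seminorm of $w := u^{(s+1)/2}$. The task is then to convert an $L^2$-based seminorm of $w$ into an $L^{1+s}$-based Gagliardo seminorm of $u = w^{2/(s+1)}$, paying the price of the small loss $\delta$ in the smoothness index and the factor $\|u\|_{L^{1+s}}^{1+s}$. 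Concretely, I would write the $\dot W^{\gamma,1+s}$-seminorm of $u$ with $\gamma = \alpha/(2+2s)-\delta$ as a double integral, insert the factorization $u(x)-u(y)$ in terms of $w(x)^{2/(s+1)}-w(y)^{2/(s+1)}$, use the Hölder-type pointwise bound $|w^{2/(s+1)}(x)-w^{2/(s+1)}(y)|\lesssim (w(x)+w(y))^{(1-s)/(s+1)}|w(x)-w(y)|$ when $s\le 1$, and then apply Hölder's inequality in $(x,y)$ with exponents tuned so that the $|w(x)-w(y)|^2/|x-y|^{d+\alpha}$ piece reproduces the $\dot H^{\alpha/2}(w)$-seminorm while the remaining factor yields $\|u\|_{L^{1+s}}$. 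The small parameter $\delta>0$ is precisely what makes the leftover singular kernel integrable after Hölder; this interpolation-with-loss is the technical heart of the lemma and the main obstacle, since one must verify the exponent bookkeeping closes exactly at $\gamma=\alpha/(2+2s)$ in the limit $\delta\to 0$ and that the condition $s\le 1$ is used precisely in controlling the power $(1-s)/(s+1)\ge 0$.

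For \eqref{ene:3}, I would run the same scheme with the nonlinearity $u^s$ replaced by $\log u$: starting from
$$
\int_{\Td}\Lambda^\alpha u(x)\log u(x)\,dx = \frac{c_{\alpha,d}}{2}\int_{\Td}\int\frac{\big(u(x)-u(y)\big)\big(\log u(x)-\log u(y)\big)}{|x-y|^{d+\alpha}}\,dy\,dx,
$$
and using the logarithmic Stroock--Varopoulos inequality $(a-b)(\log a-\log b)\ge (\sqrt a-\sqrt b)^2\cdot\text{const}$ is not quite what is needed here; instead the relevant elementary bound is $|a-b|\le (a-b)(\log a-\log b)^{1/2}\cdot(\cdots)$, so I would directly estimate the $\dot W^{\alpha/2-\delta,1}$-seminorm of $u$ via Hölder in $(x,y)$, splitting the kernel $|x-y|^{-(d+\alpha/2+ \cdots)}$ and pairing $|u(x)-u(y)|$ against $\sqrt{(u(x)-u(y))(\log u(x)-\log u(y))}$, with the complementary factor producing $\|u\|_{L^1}$ after integrating out the integrable singularity created by the $\delta$-loss. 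This is the $s\to 0$, $L^1$-endpoint analogue of \eqref{ene:2}, so once the argument for \eqref{ene:2} is set up carefully, \eqref{ene:3} follows by the same Hölder bookkeeping and presents no new difficulty.
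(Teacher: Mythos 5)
Your plans for \eqref{ene:1} and \eqref{ene:3} are sound. For \eqref{ene:1}, symmetrizing the periodic kernel \eqref{Lkernelalpha} and applying the pointwise inequality $\tfrac{4s}{(1+s)^2}\bigl(a^{\frac{s+1}{2}}-b^{\frac{s+1}{2}}\bigr)^2\le(a-b)(a^s-b^s)$ kernel-by-kernel (all kernels are positive, so the image sum over $k\neq 0$ causes no trouble) is the standard Stroock--Varopoulos argument; note the paper does not reprove this, it simply cites \cite{cor2, liskevich11some}. For \eqref{ene:3}, your Cauchy--Schwarz pairing of $|u(x)-u(y)|$ against $\sqrt{(u(x)-u(y))(\log u(x)-\log u(y))}$ closes, with the complementary factor controlled by the logarithmic-mean bound $\tfrac{a-b}{\log a-\log b}\le\tfrac{a+b}{2}$ and the $\delta$-loss rendering the leftover kernel $|x-y|^{-(d-2\delta)}$ integrable; the paper again only cites \cite{BG4} here.

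The problem is in \eqref{ene:2}, exactly at the ``exponent tuning'' you leave unspecified. After your pointwise bound the integrand is $(w(x)+w(y))^{1-s}\,|w(x)-w(y)|^{1+s}\,|x-y|^{-d-(1+s)\gamma}$, and the natural reading of your plan --- send \emph{all} $1+s$ powers of $|w(x)-w(y)|$ into the factor reproducing the Dirichlet form, i.e.\ H\"older with exponents $\bigl(\tfrac{2}{1+s},\tfrac{2}{1-s}\bigr)$ --- yields
\[
\|u\|_{\dot W^{\gamma,1+s}}^{2+2s}\le C\Bigl(\int_\Td\Lambda^\alpha u\,u^s\,dx\Bigr)^{1+s}\|u\|_{L^{1+s}}^{(1+s)(1-s)},
\]
with the Dirichlet form raised to the power $1+s$ rather than $1$. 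This is \emph{not} \eqref{ene:2} (neither inequality implies the other), and it is too weak for how the lemma is used: in \eqref{eq:ener:n1}--\eqref{aee5} one needs the bound \emph{linear} in $\int\Lambda^\alpha u\,u^s$ to absorb against the energy identity, and your version would only deliver $u\in L^{2}(0,T;\dot W^{\gamma,1+s})$ instead of $L^{2+2s}$. The fix stays entirely within your framework: use plain Cauchy--Schwarz $(2,2)$, pairing only a \emph{single} power of $|w(x)-w(y)|$ with the full kernel, i.e.\ split the integrand as
\[
\Bigl[\,|w(x)-w(y)|\,|x-y|^{-\frac{d+\alpha}{2}}\Bigr]\cdot\Bigl[\,|w(x)-w(y)|^{s}(w(x)+w(y))^{1-s}\,|x-y|^{-\frac d2-(1+s)\gamma+\frac\alpha2}\Bigr].
\]
The first factor squares to the Gagliardo form of $w$, bounded by $\int\Lambda^\alpha u\,u^s$ via \eqref{ene:1}; the second squares to at most $C(w(x)+w(y))^2|x-y|^{-(d-2(1+s)\delta)}$, which is integrable and gives $\|w\|_{L^2}^2=\|u\|_{L^{1+s}}^{1+s}$ --- exactly \eqref{ene:2}. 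This corrected split is in substance what the paper does; the only structural difference is that instead of your substitution $w=u^{(1+s)/2}$, the paper writes $u^s(x)-u^s(\eta)=s\int_0^1(\lambda u(x)+(1-\lambda)u(\eta))^{s-1}(u(x)-u(\eta))\,d\lambda$, so that the weighted Dirichlet form of $u$ itself is bounded by $\int\Lambda^\alpha u\,u^s$ directly, without passing through \eqref{ene:1}; its Cauchy--Schwarz factors $F$ and $G$ are precisely the analogues of the two brackets above.
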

\begin{proof}
Estimate \eqref{ene:1} is called sometimes the Stroock-Varopulos or the C\'ordoba-C\'ordoba inequality, compare \cite{cor2, liskevich11some}.  Inequalities \eqref{ene:2} and \eqref{ene:3} are multidimensional versions of, respectively, Lemma 7 and Lemma 6 in Appendix B of \cite{BG4}. For completeness, let us provide 
 a proof for \eqref{ene:2}.
Let us define 
$$
I=\int_\Td u^s(x)  \Lambda^\alpha u(x) dx.
$$
Using \eqref{Lkernelalpha} and changing variables, we compute
\begin{align*}
I&=c_{\alpha,d} \int_\Td \sum_{k\in \Zd, k\neq 0}\int_{\TT} u^s(x) \frac{u(x)-u(\eta)}{|x-\eta+2k\pi|^{d+\alpha}} d\eta dx +c_{\alpha,d} \int_\Td \text{P.V.}\int_{\Td} u^s(x)\frac{u(x)-u(\eta)}{|x-\eta|^{d+\alpha}} d\eta dx\\
&=c_{\alpha,d} \int_\Td \sum_{k\in \Zd, k\neq 0}\int_{\Td} u^s(\eta) \frac{u(\eta)-u(x)}{|\eta-x+2k\pi|^{d+\alpha}} d\eta dx +c_{\alpha} \int_\Td \text{P.V.}\int_{\Td} u^s(\eta)\frac{u(\eta)-u(x)}{|x-\eta|^{d+\alpha}}d\eta dx.
\end{align*}
%Hence
%\begin{align*}
%\frac{2}{c_{\alpha}} I_1&= \int_\Td \sum_{k\in \Zd, k\neq 0}\int_{\Td} (u^s(x)-u^s(\eta)) \frac{u(x)-u(\eta)}{|x-\eta+2k\pi|^{d+\alpha}} d\eta dx \\
%&\quad+ \int_\Td \text{P.V.}\int_{\Td} (u^s(x)-u^s(\eta))\frac{u(x)-u(\eta)}{|x-\eta|^{d+\alpha}} d\eta dx\\
%& \geq0.
%\end{align*}
%hence \eqref{ene:1}. 
In particular
\begin{equation}  \label{L7I1}
\begin{aligned}
I&\geq\frac{c_{\alpha,d}}{2}\int_\Td \int_{\Td} (u^s(x)-u^s(\eta))\frac{u(x)-u(\eta)}{|x-\eta|^{d+\alpha}} d\eta dx \\
&=  \frac{c_{\alpha,d}}{2} \int_\Td \int_{\Td}\int_0^1 \frac{d}{d \lambda}  \left( (\lambda u(x)+(1-\lambda)u(\eta))^s \right)     \frac{u(x)-u(\eta)}{|x-\eta|^{d+\alpha}} d\eta dx \\
& = \frac{c_{\alpha,d}}{2}  \int_\Td \int_{\Td}\int_0^{1} \frac{s}{(\lambda u(x)+(1-\lambda)u(\eta))^{1-s}}\frac{(u(x)-u(\eta))^2}{|x-\eta|^{d+\alpha}} d \lambda d\eta dx.
\end{aligned}
\end{equation}
Let us define 
$$
J=: \|u\|_{\dot{W}^{\beta,1+s}}^{1+s}=\int_\Td\int_\Td\frac{|u(x)-u(\eta)|^{1+s}}{|x-\eta|^{d+(1+s)\beta}}dxd\eta.
$$
Then we compute
\begin{align*}
J&=\int_\Td\int_\Td\int^1_0\frac{|u(x)-u(\eta)|^{1+s}}{|x-\eta|^{d+(1+s)\beta}}d\lambda dxd\eta\\
&=\int_\Td\int_\Td\int^1_0\frac{|u(x)-u(\eta)|}{|x-\eta|^{d/2+(1+s)\beta-\alpha/2}}\frac{|u(x)-u(\eta)|^s}{|x-\eta|^{d/2+\alpha/2}} \frac{|\lambda u(x)+(1-\lambda)u(\eta)|^{(1-s)/2}}{|\lambda u(x)+(1-\lambda)u(\eta)|^{(1-s)/2}} d\lambda dxd\eta\\
&=\int_\Td\int_\Td\int^1_0 F(x,\eta,\lambda)G(x,\eta,\lambda)d\lambda dx d\eta,
\end{align*}
where
$$
F=\frac{|u(x)-u(\eta)|}{|x-\eta|^{d/2+\alpha/2}}\frac{1}{|\lambda u(x)+(1-\lambda)u(\eta)|^{(1-s)/2}},
$$
$$
G=\frac{|u(x)-u(\eta)|^s}{|x-\eta|^{d/2+(1+s)\beta-\alpha/2}}|\lambda u(x)+(1-\lambda)u(\eta)|^{(1-s)/2}.
$$
Consequently 
\begin{equation}\label{L7I}
J\leq \|F\|_{L^2(\Td\times\Td\times[0,1])}\|G\|_{L^2(\Td\times\Td\times[0,1])}.
\end{equation}
On the one hand we have via \eqref{L7I1}
\[
\|F\|_{L^2(\Td\times\Td\times[0,1])}^2=\int_\Td\int_\Td \int_0^1\frac{(u(x)-u(\eta))^2}{|x-\eta|^{d+\alpha}}\frac{d\lambda dxd\eta}{(\lambda u(x)+(1-\lambda)u(\eta))^{1-s}} \leq \frac{2}{c_{\alpha} s} I.
\]
On the other  hand, since $s \le 1$
\begin{align*}
G^2&=\frac{|u(x)-u(\eta)|^{2s}}{|x-\eta|^{d/2+2(1+s)\beta-\alpha}}|\lambda u(x)+(1-\lambda)u(\eta)|^{1-s}\\
&\leq C(s)\frac{|u(x)|^{1+s}+|u(\eta)|^{1+s}}{|x-\eta|^{d+2(1+s)\beta-\alpha}} =  C(s)\frac{|u(x)|^{1+s}+|u(\eta)|^{1+s}}{|x-\eta|^{d- 2(1+s) \delta}},
\end{align*}
where for the equality we have made the choice $\beta=\frac{\alpha}{2+2s}-\delta$. Therefore, after integration and use of $\delta<\alpha/(2+2s) <1/(1+s)$, we arrive at
$$
\|G\|_{L^2(\Td\times\Td\times[0,1])}^2\leq C(s,\delta)\|u\|_{L^{1+s}}^{1+s}.
$$
Estimates for $F$ and $G$ plugged into \eqref{L7I} give \eqref{ene:2}. As mentioned at the beginning, inequality \eqref{ene:3} is an analogous multidimensionalization of  Lemma 6 in Appendix B of \cite{BG4}, as the just proved \eqref{ene:2} is in relation to Lemma 7 there.
\end{proof}

\subsection{A priori energy estimates} Let us define 
\begin{equation}\label{calN}
\mathcal{N}=\max\{\|u_0\|_{L^1},4\pi^2\}.
\end{equation}
\begin{lem}[Weak estimates]\label{Lemmanorms}  Let $0<T<\infty$ be arbitrary.  Let $u \in C ([0, T); H^2)$ be a non-negative solution to  \eqref{eqPKS} with $B$ given either by \eqref{f:B1} or by \eqref{f:B}. Take  any $0< s$ such that
\begin{equation}\label{lemnom1}
\frac{\chi s}{s+1} \le r.
\end{equation}
Then for any $t\in [0,T]$
\begin{align}
\sup_{t\in[0,T]}  \|u(t)\|_{L^1}&\leq \mathcal{N}, \label{aee1}\\
 \int_0^T\|u(\tau)\|^2_{L^2 }d \tau &\leq  \mathcal{N} (1/r+ T),  \label{aee2} \\
\sup_{t\in[0,T]} \|u(t) \|_{L^{s+1}} &\le e^{r T}\| u_0 \|_{L^{s+1}}, \label{aee3}  \\
 \int_0^T  \|u^\frac{s+1}{2} (\tau) \|^2_{H^\frac{\alpha}{2}} d \tau &\le \frac{{r(1+s)^2}}{4 s}  e^{r T}\| u_0^\frac{s+1}{2}  \|^2_{L^{2}} \label{aee3e}  \\
\left(  r (s+1) - \chi s \right)  \int_0^T \|u(\tau) \|^{s+2}_{L^{s+2}}  d \tau &\le  [r(s+1) e^{r(s+1)T } +1] \| u_0 \|^{s+1}_{L^{s+1}}, \label{aee4} 
\end{align}
If additionally $s \le 1$, then for an arbitrary $\delta\in \left(0,\frac{\alpha}{2+2s}\right)$ 
\begin{align}
\int_0^T\|u(\tau)\|_{\dot{W}^{\alpha/(2+2s)-\delta,1+s}}^{2+2s} d \tau &\leq C(\alpha,s,\delta) F_1(T,r,s)\| u_0 \|^{2s+2}_{L^{s+1}}, \label{aee5}
\end{align}
where
\[
F_1(T,r,s)=(rT+s+1) e^{2 r T}.
\]
Similarly,  for any $\delta\in \left(0,\frac{\alpha}{2}\right)$ 
\begin{align}
\int_0^T\|u(\tau)\|_{\dot{W}^{\alpha/2-\delta,1}}^2 d \tau&\leq C(\alpha, \delta)  \mathcal{N}F_2(u_0,T,r,\mathcal{N},\chi), \label{aee6}
\end{align}
where
$$
F_2(u_0,T,r,\mathcal{N},\chi)= \|u_0\|^2_{L^{2}} +  \mathcal{N} (\chi+r+ T  + 1).
$$
\end{lem}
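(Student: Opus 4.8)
The plan is to obtain every bound by testing \eqref{eqPKS} against a suitable function of $u$ and exploiting three structural facts: (a) on the torus $\int_\TT \Lambda^\alpha u\,dx=0$ and $\int_\TT \nabla\cdot(\,\cdot\,)\,dx=0$; (b) for both \eqref{f:B1} and \eqref{f:B} one has $\nabla\cdot B(u)=u-g$ with $\int_\TT u^{s+1}g\,dx\ge0$ whenever $u\ge0$ (namely $g=\langle u\rangle\ge0$ for \eqref{f:B}, using $\nabla\cdot\nabla\Delta^{-1}=\mathrm{Id}$, and $g=(1+\Lambda^\beta)^{-1}u\ge0$ for \eqref{f:B1}, using $\nabla\cdot R=\Lambda$ and positivity of the resolvent); and (c) the Stroock--Varopulos inequality \eqref{ene:1}. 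Since multiplication by $u^s$ or $\log u$ is only formal where $u$ vanishes, each computation is understood after a standard regularization (replacing $u$ by $u+\varepsilon$ inside the test nonlinearity and letting $\varepsilon\to0$), which I will not belabor.

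First, \eqref{aee1} and \eqref{aee2}. Integrating \eqref{eqPKS} over $\TT$, fact (a) removes the diffusion and the drift, leaving $\pat\|u\|_{L^1}=r\|u\|_{L^1}-r\|u\|_{L^2}^2$; Jensen gives $\|u\|_{L^2}^2\ge\|u\|_{L^1}^2/(4\pi^2)$, so $\|u\|_{L^1}$ obeys a logistic differential inequality whose solution never exceeds $\mathcal{N}$, which is \eqref{aee1}. Rewriting the same identity as $r\|u\|_{L^2}^2=r\|u\|_{L^1}-\pat\|u\|_{L^1}$ and integrating on $[0,T]$ yields $r\int_0^T\|u\|_{L^2}^2\le rT\mathcal{N}+\|u_0\|_{L^1}\le\mathcal{N}(rT+1)$, i.e. \eqref{aee2}.

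Next, the $L^{s+1}$ family \eqref{aee3}--\eqref{aee5}. Testing \eqref{eqPKS} with $u^s$, bounding $-\int u^s\Lambda^\alpha u$ below by \eqref{ene:1}, and treating the drift by integrating by parts twice, using $u^s\na u=\tfrac1{s+1}\na(u^{s+1})$, and invoking fact (b), gives
\[
\tfrac1{s+1}\pat\|u\|_{L^{s+1}}^{s+1}+\tfrac{4s}{(1+s)^2}\|u^{\frac{s+1}{2}}\|_{\dot{H}^{\alpha/2}}^2\le\Big(\tfrac{\chi s}{s+1}-r\Big)\int_\TT u^{s+2}+r\|u\|_{L^{s+1}}^{s+1}.
\]
Condition \eqref{lemnom1} is precisely what makes the coefficient of the nonnegative $\int u^{s+2}$ term nonpositive, so it may be dropped; Gronwall then yields \eqref{aee3}, integrating the surviving inequality in time yields \eqref{aee3e}, and keeping rather than discarding $\int u^{s+2}$ (moving it to the left and integrating) yields \eqref{aee4}. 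Finally \eqref{aee5} follows by inserting the entropy inequality \eqref{ene:2}, bounding $\|u\|_{L^{1+s}}^{1+s}$ by its supremum from \eqref{aee3} and $\int_0^T\!\int_\TT u^s\Lambda^\alpha u$ by the dissipation already controlled in \eqref{aee3e}.

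Lastly, \eqref{aee6}, which does not use \eqref{lemnom1}. Testing \eqref{eqPKS} with $\log u$ produces $\pat\int_\TT(u\log u-u)$ on the left, the nonnegative entropy dissipation $\int_\TT\Lambda^\alpha u\log u$ from diffusion, a drift term equal after integration by parts to $\chi\int_\TT u\,\nabla\cdot B(u)=\chi\int_\TT u(u-g)\le\chi\|u\|_{L^2}^2$, and the logistic contribution $r\int_\TT u\log u-r\int_\TT u^2\log u$. Integrating in time and using $u\log u\ge-1/e$ (entropy bounded below, hence controlled boundary terms), $u\log u\le u^2$, and $-u^2\log u\le1/(2e)$ on $\{u<1\}$ while $\le0$ on $\{u\ge1\}$, bounds the time-integrated entropy dissipation by $\|u_0\|_{L^2}^2+\mathcal{N}(\chi+r+T+1)$ up to constants, with \eqref{aee2} absorbing $\int_0^T\|u\|_{L^2}^2$; multiplying \eqref{ene:3} by $\|u\|_{L^1}\le\mathcal{N}$ and integrating then gives \eqref{aee6}. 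The main obstacle throughout is the drift: the whole mechanism rests on fact (b), which converts the transport nonlinearity into a $+\tfrac{\chi s}{s+1}\int u^{s+2}$ (resp. $\chi\int u^2$) term of the same order as the dissipative logistic absorption $-r\int u^{s+2}$, so that verifying the sign of $g$ and checking that \eqref{lemnom1} makes the logistic term win is the decisive point; the secondary technical point is justifying the $u^s$ and $\log u$ tests where $u$ may vanish, handled by the regularization above.
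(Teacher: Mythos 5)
Your proposal follows the paper's proof essentially step by step: the $L^1$ identity plus Jensen for \eqref{aee1}--\eqref{aee2}; testing with $u^s$, the structural fact $\nabla\cdot B(u)=u-g$ with $g\ge 0$ (for \eqref{f:B1} the paper proves $g=(1+\Lambda^{\beta})^{-1}u\ge 0$ by a nonlocal weak minimum principle, which is exactly your ``positivity of the resolvent''), and the Stroock--Varopulos inequality \eqref{ene:1}, with \eqref{lemnom1} making the $\int u^{s+2}$ term harmless, for \eqref{aee3}--\eqref{aee4}; and the entropy functional $\mathcal{F}=\int_\TT u\log u-u+1$ together with \eqref{ene:3}, \eqref{aee2} for \eqref{aee6}. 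In fact your treatment of the logistic contribution in the entropy estimate (via $u\log u\le u^2$ and $-u^2\log u\le 1/(2e)$ on $\{u<1\}$, $\le 0$ on $\{u\ge1\}$) is cleaner than the paper's intermediate display, which invokes $r\int u(1-u)\log u\le r\int\log u$ --- an inequality that actually fails pointwise for $u<1$, though harmlessly so, since the term is nonpositive and the final bound only uses positive majorants.

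The one step that fails as literally written is your derivation of \eqref{aee5}: you propose to bound $\int_0^T\int_\TT u^s\Lambda^\alpha u$ ``by the dissipation already controlled in \eqref{aee3e}.'' The inequality points the wrong way there: by \eqref{ene:1}, the quantity $\int_0^T\|u^{(s+1)/2}\|^2_{\dot{H}^{\alpha/2}}$ appearing in \eqref{aee3e} is a \emph{lower} bound (up to a constant) for $\int_0^T\int_\TT u^s\Lambda^\alpha u$, so \eqref{aee3e} gives no upper control on the latter. The correct source of the upper bound is the time-integrated energy inequality \emph{before} Stroock--Varopulos is applied: integrating
\[
\frac{1}{s+1}\frac{d}{dt}\int_\TT u^{s+1}\,dx+\int_\TT u^s\Lambda^\alpha u\,dx\le r\int_\TT u^{s+1}\,dx
\]
(valid under \eqref{lemnom1}) in time and using \eqref{aee3} yields
\[
\int_0^T\int_\TT u^s\Lambda^\alpha u\,dx\,d\tau\le \frac{1}{s+1}\|u_0\|^{s+1}_{L^{s+1}}+\left(e^{r(s+1)T}-1\right)\|u_0\|^{s+1}_{L^{s+1}},
\]
after which your plan (integrate \eqref{ene:2} in time and bound $\|u\|^{1+s}_{L^{1+s}}$ by its supremum from \eqref{aee3}) goes through. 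This is, in slightly rearranged form, what the paper does: it multiplies the energy inequality by $\|u\|^{1+s}_{L^{1+s}}$ and inserts \eqref{ene:2} in place of \eqref{ene:1} to get \eqref{eq:ener:n1}, then integrates in time. With that one-line repair your proof is complete; the remaining discrepancies in the explicit factors $F_1$, $F_2$ are at the level of the unspecified constants $C(\alpha,s,\delta)$, $C(\alpha,\delta)$ in the statement.
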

\begin{proof}
In view of \eqref{eqPKS}, the ODE for $\|u(t)\|_{L^1}$ reads 
\begin{equation}\label{normL1}
\frac{d}{dt}\|u(t)\|_{L^1}=r\|u(t)\|_{L^1}-r\|u(t)\|_{L^2}^2.
\end{equation}
(We are allowed to write time derivatives of the space norms involved in this and the next proof, thanks to our qualitative assumption $u \in C ([0, T); H^2)$, which implies from equation \eqref{eqPKS} that $u_t \in L^\infty (L^2)$.)
Recalling Jensen's inequality
$
\|u(t)\|_{L^1}^2\leq 4\pi^2\|u(t)\|_{L^2}^2,
$
we get
$$
\frac{d}{dt}\|u(t)\|_{L^1}\leq r \|u(t)\|_{L^1}\left(1-\frac{1}{4\pi^2}\|u(t)\|_{L^1}\right).
$$
Hence \eqref{aee1}. 

Let us  integrate \eqref{normL1} between $0$ and a chosen $t$ to obtain
$$
\|u(t)\|_{L^1}-\|u_0\|_{L^1}= r \int_0^t\|u(s)\|_{L^1}ds-r\int_0^t\|u(s)\|_{L^2}^2ds,
$$
thus
$$
r \int_{0}^{t}\|u(s)\|_{L^2}^2\,ds\leq \|u_0\|_{L^1}-\|u(t)\|_{L^1}
+r \sup_{0\leq s\leq t}\|u(s)\|_{L^1}t\leq  \mathcal{N} + r \mathcal{N}t
$$
\emph{i.e.} \eqref{aee2}. 

Testing \eqref{eqDD} with $u^s$ we get after two integrations by parts
\begin{equation}\label{eq:ener}
\frac{1}{s+1} \frac{d}{dt} \int_\TT u^{s+1}dx +     \int_\TT u^s  \Lambda^\alpha u \, dx  = \int_\TT  \frac{\chi s}{s+1}  u^{s+1}  (\nabla \cdot B) (u)  +ru^{s+1} -ru^{s+2}dx.
\end{equation}
For the case of $B$ given by \eqref{f:B} we have straightforwardly 
\[
\nabla \cdot B (u) = u - \langle u \rangle \le u
\]

Our another choice \eqref{f:B1} for $B$ implies
\[
\nabla \cdot B (u)  = \Lambda^{\beta}(1+\Lambda^{\beta})^{-1}  (u) =:  \Lambda^{\beta} v,
\]
consequently, by the non-negativity of $u$, we have 
\[
u = v + \Lambda^{\beta} v \ge 0 \implies v  \ge -\Lambda^{\beta} v,
\]
hence by a nonlocal weak minimum principle 
\[
v  \ge \min v (x) = v (x_*) \ge -\Lambda^{\beta} v (x_*) \ge 0,
\]
therefore we obtain also in the case \eqref{f:B1} that
\begin{equation}\label{divB}
\nabla \cdot B (u) = u - v \le u.
\end{equation}

%We will continue our considerations for \eqref{f:B1} giving \eqref{divB}, observing, that the other case is analogous, because we have the global control over $ \langle u \rangle$ via $\mathcal{N}$. 

The inequality \eqref{eq:ener} gives via  \eqref{divB} and the Stroock-Varopulos inequality \eqref{ene:1}
\begin{equation}\label{eq:gr2}
 \frac{d}{dt} \int_\TT u^{s+1}  + \frac{4s}{1+s}  \int_\TT  | \Lambda^\frac{\alpha}{2} (u^\frac{s+1}{2}) |^2 dx + \left( r (s+1) - \chi s \right)  \int_\TT u^{s+2} \le r (s+1) \int_\TT  u^{s+1}. %-\lambda \chi s \int_\T  u^{s+1}  v .
\end{equation}
%where we have used Lemma \ref{lemmaboundsv} and \eqref{boundlapbeta2c} (for the case $v$ defined in \eqref{eqv}) and \eqref{eqv2} for the case $B(u)$ defined in \eqref{eqB2}. 
Therefore, for $s$ satisfying \eqref{lemnom1} \emph{i.e.} $r (s+1) - \chi s \ge 0$, we get \eqref{aee3}.

Integrating \eqref{eq:gr2} in time and using next \eqref{aee3} we obtain \eqref{aee3e} and
%\begin{equation}\label{eq:lp}
\[
 \left(  r (s+1) - \chi s \right)  \int_0^t \int_\TT u^{s+2} (x, \tau) dx d \tau \le [r(s+1) e^{r(s+1)t } +1] \int_\TT u_0^{s+1} (x) dx,
 \]
%\end{equation}
which is \eqref{aee4}.

Using the inequality \eqref{ene:2} instead of \eqref{ene:1} for the second term of l.h.s. of  \eqref{eq:ener}, we arrive at

\begin{equation}\label{eq:ener:n1}
\|u (t) \|_{L^{1+s} }^{1+s}\frac{1}{s+1} \frac{d}{dt} \int_\TT u^{s+1}dx +  \frac{1}{C(\alpha,s,\delta)}  \|u (t) \|_{\dot{W}^{\alpha/(2+2s)-\delta,1+s}}^{2+2s}   \le r  \|u(t) \|^{2s+2}_{L^{s+1}}
\end{equation}
Time integration in \eqref{eq:ener:n1} together with \eqref{aee3} imply
\[
\int_0^T \|u (t) \|_{\dot{W}^{\alpha/(2+2s)-\delta,1+s}}^{2+2s} dt   \le  C(\alpha,s,\delta)\sup_{0\leq t \le T} \|u(t) \|^{2s+2}_{L^{s+1}} (rT + s+1),
\]
which gives \eqref{aee5}.

Let us define the functional
$$
\mathcal{F}=\int_\TT u\log(u)-u +1 \; dx.
$$

We have, integrating twice by parts
\[
\frac{d}{dt}\mathcal{F}=\int_\TT \pat u\log(u)dx \leq-\int_\TT \Lambda^\alpha u\log(u) dx+\chi\int_\TT u \,  (\nabla \cdot B) (u)  dx + r \int_\TT \log (u) dx.
\]
Consequently, integrating in time and applying \eqref{aee2}, we obtain
$$
\mathcal{F}(u(t))+\int_0^t\int_\TT \Lambda^\alpha u\log(u) dx\leq \mathcal{F}(u(0))+ \chi \mathcal{N} (1/r+ T) +r \mathcal{N} (1/r+ T + 4 \pi^2).
$$
This, together with 
$$
\mathcal{F}(u(0))\leq \int_{\{u(x,0)\geq1\}}u(x,0)^2 dx +4 \pi^2\leq \|u(0)\|^2_{L^2}+4\pi^2
$$
implies
\begin{equation}\label{uniformboundentropy}
\mathcal{F}(u(t))+\int_0^t\int_\TT \Lambda^\alpha (u)\log(u) \leq \|u_0\|^2_{L^{2}} + 2 \mathcal{N} (\chi /r+ T + 2 \pi^2 r + 1).
\end{equation}
Hence, using  \eqref{ene:3} and \eqref{aee1}, we get  \eqref{aee6}.
\end{proof}

\begin{lem}[Strong estimates]\label{Lemmanorms2}  Let $0<T<\infty$ be arbitrary.  Let $u \in C ([0, T); H^2)$ be a non-negative solution to  \eqref{eqPKS} with any $0<\alpha<2$, positive $\chi,\beta,r$ and $B$ given either by \eqref{f:B1} or by \eqref{f:B}. If
\begin{equation}\label{apstrongC}
\alpha > 2\left(1-\frac{r}{\chi}\right)
\end{equation}
then for any finite $p \ge 1$ there exist a finite $C_1  ({p,\chi, r, \mathcal{N}})$ and $C_2  ({p,\chi, r, \alpha})$ such that
\begin{equation}\label{apstrong}
\|  u \|_{L^\infty (0,T; L^{p})} \le  C_1  \left( e^{C_2 T} +1\right)  \| u_0 \|_{L^{p}}^{C_2}.
\end{equation}
\end{lem}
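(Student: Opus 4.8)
The plan is to push the $L^p$ energy estimate behind Lemma~\ref{Lemmanorms} past the exponent $p_0:=\frac{\chi}{\chi-r}$ at which its damping coefficient changes sign, and to absorb the resulting supercritical contribution by the fractional dissipation after interpolating against the \emph{already controlled} norm $L^{p_0}$ rather than against $L^{1}$. Writing $p=s+1$, the inequality \eqref{eq:gr2} reads
\begin{equation*}
\frac{d}{dt}\|u\|_{L^p}^p + \frac{4(p-1)}{p}\|\Lambda^{\alpha/2}(u^{p/2})\|_{L^2}^2 + (rp-\chi(p-1))\,\|u\|_{L^{p+1}}^{p+1} \le rp\,\|u\|_{L^p}^p.
\end{equation*}
If $\chi\le r$, then $rp-\chi(p-1)=p(r-\chi)+\chi\ge\chi>0$ for every $p\ge1$, so dropping the dissipation and applying Gr\"onwall gives \eqref{apstrong} at once; the same holds when $\chi>r$ but $p\le p_0$, a range already covered by \eqref{aee3}. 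Hence the only genuine case is $\chi>r$ and $p>p_0$, where the coefficient is negative and the term $(\chi(p-1)-rp)\,\|u\|_{L^{p+1}}^{p+1}$, moved to the right-hand side, must be controlled.

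Next I would set $v=u^{p/2}$ and observe $\|u\|_{L^{p+1}}^{p+1}=\|v\|_{L^{Q}}^{Q}$ with $Q=\tfrac{2(p+1)}{p}$, while \eqref{aee3} controls $\|v\|_{L^{R}}=\|u\|_{L^{p_0}}^{p/2}$ with $R=\tfrac{2p_0}{p}$, and the dissipation together with the Stroock--Varopulos inequality \eqref{ene:1} controls $\|v\|_{\dot H^{\alpha/2}}$. The decisive step is the Gagliardo--Nirenberg interpolation on $\TT$, $\|v\|_{L^{Q}}\le C\,\|v\|_{H^{\alpha/2}}^{\theta}\|v\|_{L^{R}}^{1-\theta}$, whose exponent solves $\tfrac{p}{2(p+1)}=\theta\tfrac{2-\alpha}{4}+(1-\theta)\tfrac{p}{2p_0}$. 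A direct computation yields $\theta Q=\frac{4(p_0-p-1)}{(2-\alpha)p_0-2p}$, and the power of $\|v\|_{\dot H^{\alpha/2}}$ after raising to the exponent $Q$ stays below $2$ — so that Young's inequality absorbs the bad term into half of $\|\Lambda^{\alpha/2}(u^{p/2})\|_{L^2}^2$ — exactly when $\theta Q<2$, which reduces precisely to $\alpha p_0>2$, that is, to the hypothesis \eqref{apstrongC}. This is the crux and the step I expect to be most delicate: the logistic damping is what raises the freely available integrability exponent from $1$ to $p_0$, since interpolating against $L^{1}$ instead would only close for $\alpha>2$, recovering the classical blow-up threshold.

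Finally I would collect the leftovers. The lower-order part of $\|v\|_{H^{\alpha/2}}^2$ and the $\varepsilon$-term of Young's inequality each contribute a multiple of $\|v\|_{L^2}^2=\|u\|_{L^p}^p$, while the non-absorbed factor produces a forcing of the form $C(p,\chi,r,\alpha)\,\|u\|_{L^{p_0}}^{\kappa}$, bounded on $[0,T]$ by \eqref{aee3}. This leaves a differential inequality $\frac{d}{dt}\|u\|_{L^p}^p\le A\,\|u\|_{L^p}^p+B(t)$ with $A=A(p,r,\alpha)$ and $\sup_{[0,T]}B\le C(p,\chi,r,\mathcal N)\,e^{\kappa r T}\|u_0\|_{L^{p_0}}^{\kappa}$. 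Gr\"onwall's lemma, combined with $\|u_0\|_{L^{p_0}}\le C\,\|u_0\|_{L^p}$ on $\TT$ and the uniform $L^1$ bound $\mathcal N$ from \eqref{calN}, then yields \eqref{apstrong} with constants $C_1,C_2$ of the asserted dependence.
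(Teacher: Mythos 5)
Your proposal is correct and takes essentially the same route as the paper's own proof: both start from \eqref{eq:gr2}, exploit the $L^{p_0}$ bound \eqref{aee3} furnished by the logistic damping (rather than the mere $L^1$ bound, which would force $\alpha>2$), interpolate the supercritical term $\|u\|_{L^{p+1}}^{p+1}$ between the fractional dissipation and $L^{p_0}$, and absorb it via Young's inequality precisely under the threshold $\alpha p_0>2$, which is \eqref{apstrongC}. The remaining differences are cosmetic bookkeeping: you apply Gagliardo--Nirenberg to $v=u^{p/2}$ and close with Gr\"onwall plus H\"older for the data norm, whereas the paper first converts the dissipation into a Lebesgue norm by Sobolev embedding, interpolates on the Lebesgue scale for $u$ itself, disposes of the linear term via $u^{s+1}-u^{s+2}\le 1$, and interpolates $\|u_0\|_{L^{p_0}}$ between $L^1$ and $L^p$.
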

\begin{proof}
Let us recall \eqref{eq:gr2}, valid a priori for any $0<s<\infty$. We have not used the dissipation in the classical way there yet. Putting the last summand of l.h.s. of   \eqref{eq:gr2} to its r.h.s. and using the Sobolev embedding 
$$
H^{\alpha/2}\subset L^{\frac{2}{1-\alpha/2}}
$$
for $u^\frac{s+1}{2}$, we obtain
\begin{equation}\label{eq:enerd}
 \frac{d}{dt} \int_\TT u^{s+1}dx +   c_{s,\alpha} \left( \int_\TT  | u|^\frac{s+1}{1-\alpha/2} dx \right)^{1- \alpha/2}  \le \int_\TT  {\chi s}  u^{s+2}  +r (s+1) u^{s+1} -r (s+1) u^{s+2} dx.
\end{equation}
Let $m>0$ be a number, to be precised further. Inequality $u^{s+1} - u^{s+2} \le 1$ and
interpolation
\[
 \|  u \|_{L^{s+2}}   \le C \| u \|^{\theta}_{L^\frac{s+1}{1-\alpha/2}}   \| u \|^{1- \theta}_{L^m }
\]
with $\theta = \frac{\frac{m}{s+2} -1}{\frac{m (2-\alpha)}{2(s+1)}-1}$
 imply
\begin{equation*}%\label{eq:enerd2}
 \frac{d}{dt} \int_\TT u^{s+1}dx +   c_{s,\alpha}  \|  u \|^{s+1}_{L^\frac{s+1}{1-\alpha/2}}   \le {\chi s}  \| u \|^{\theta (s+2)}_{L^\frac{s+1}{1-\alpha/2}}   \| u \|^{(1- \theta) (s+2)}_{L^m } + 4 \pi^2 r (s+1) 
\end{equation*}
as long as the interpolation holds, \emph{i.e.}
\begin{equation}\label{eq:enerdc}
\alpha (s+2) > 2 \quad \text{ and } \quad m < s+2.
\end{equation}
Hence the Young inequality yields
\[
 \frac{d}{dt} \int_\TT u^{s+1}dx +   \frac{c_{s,\alpha}}{2}  \|  u \|^{s+1}_{L^\frac{s+1}{1-\alpha/2}}   \le C ({s,\chi})  \| u \|^{ \frac{(1- \theta) (s+2) (s+1)}{(s+1) - \theta (s+2)}}_{L^m} + 4 \pi^2 r (s+1).
\]
Recalling the form of $\theta$, we arrive at
\begin{equation}\label{eq:enerd2}
 \frac{d}{dt} \int_\TT u^{s+1}dx +   \frac{c_{s,\alpha}}{2} \|  u \|^{s+1}_{L^\frac{s+1}{1-\alpha/2}}   \le C ({s,\chi, r})  \left(  \| u \|^{\frac{m (\alpha (s+2) -2)}{m \alpha - 2}}_{L^m} + 1 \right).
\end{equation}
Integrating \eqref{eq:enerd2} in time and neglecting the second term on its l.h.s., one has
\begin{equation}\label{eq:enerd21}
\|  u \|_{L^\infty (0,T; L^{s+1})} \le  C  ({s,\chi, r})  \left( | u |^{\frac{\gamma}{ s+1}}_{L^\gamma (0,T;L^{m})} + T^\frac{1}{{s+1}} \right) + \|  u_0 \|_{ L^{s+1}},
\end{equation}
where
\begin{equation}\label{eq:enerd2gamma}
\gamma =: \frac{m (\alpha (s+2) -2)}{(m \alpha - 2)}.
\end{equation}
Let us estimate coarsely \eqref{eq:enerd21} and obtain 
\begin{equation}\label{eq:enerd22}
\|  u \|_{L^\infty (0,T; L^{s+1})} \le  C  ({s,\chi, r})  \left( \| u \|^{\frac{\gamma}{ s+1}}_{L^\infty (0,T;L^{m})} + T^\frac{1}{{s+1}} \right) + \|  u_0 \|_{ L^{s+1}}.
\end{equation}
It is valid as long as \eqref{eq:enerdc} and $\gamma < \infty$ hold, \emph{i.e.} 
\begin{equation}\label{eq:enerd3a}
\alpha (s+2) > 2, \quad m < s+2, \quad m \alpha > 2.
\end{equation}

Observe that if in \eqref{eq:enerd22} one had chosen $m=1$ given by \eqref{aee1}, the needed finiteness of \eqref{eq:enerd2gamma} would coerce $\alpha>2$, compare the last inequality of \eqref{eq:enerd3a}. Fortunately, the logistic term provided us with \eqref{aee3}. For $r \ge \chi$ it implies our thesis immediately. For $r < \chi$, choosing the largest integrability exponent allowed in \eqref{aee3}, \emph{i.e.} $\frac{r}{\chi - r}$, we have 
\[
\sup_{t\in[0,T]}\|u(t) \|_{L^{\frac{\chi}{\chi - r}}} \le e^{r T}\| u_0 \|_{L^{\frac{\chi}{\chi - r}}}.
\]
Consequently, we can let $m = \frac{\chi}{\chi - r}$ in \eqref{eq:enerd3a} and obtain

\begin{equation}\label{eq:enerd4}
\|  u \|_{L^\infty (0,T; L^{s+1})} \le  C  ({s,\chi, r})  \left( (e^{r T}\| u_0 \|_{L^{\frac{\chi}{\chi - r}}})^{\frac{\gamma}{ s+1}}+ T^\frac{1}{{s+1}} \right) + \|  u_0 \|_{ L^{s+1}},
\end{equation}
for which suffices, according to  \eqref{eq:enerd3a},
\begin{equation}\label{eq:enerd3b}
s+1 > \frac{2}{\alpha} -1,  \quad s+1 > \frac{\chi}{\chi - r} , \quad  \alpha >  \frac{2}{m}= 2\left(1-\frac{r}{\chi}\right).
\end{equation}
In view of the last condition of  \eqref{eq:enerd3b}, for the first condition there suffices $s+1 >  \frac{r}{\chi - r}$, which is in our case $r < \chi$ weaker than the middle one. Interpolation in  \eqref{eq:enerd4} of $\| u_0 \|_{L^{\frac{\chi}{\chi - r}}}$ between $\|  u_0 \|_{ L^{1}}$ and $\|  u_0 \|_{ L^{s+1}}$ yields \eqref{apstrong}.
\end{proof}

\begin{remark}
Estimate \eqref{eq:enerd21} or \eqref{eq:enerd22} may look appealing as a possible building block for a inductive Moser-type procedure, aimed at obtaining bound for $\|  u \|_{L^\infty (0,T; L^{\infty})} < \infty$, which would immediately imply our main Theorem \ref{th:globalstrong}. But the power ${\frac{\gamma}{ s+1}}$ in \eqref{eq:enerd21} exceeds $1$.
\end{remark}
\begin{remark}
Instead of using \eqref{eq:enerd22}, one can try to improve a little the admissible range of $\alpha$'s by using \eqref{eq:enerd21} bounded by an interpolation of \eqref{aee3} and \eqref{aee4}.
\end{remark}

\subsection{A nonlinear maximum principle}\label{sec:3b}
In this section we prove a new nonlinear maximum principle in the same spirit as the ones in \cite{constantin2012nonlinear, cor2}. 

\begin{teo}\label{th:max} Let $\delta \in (0,1)$, $\partial_{x_1}\partial_{x_2}\phi=u\in C^2(\TT)$ and denote by $x^*$ the point such that
$$
\max_{x\in \TT} u(x)=u(x^*).
$$
Then we have the following alternative:
\begin{enumerate}
\item Either 
$$
\|u\|_{L^\infty}\leq C_{\alpha, \delta} ^1\|\phi\|_{C^\delta},
$$

\item or
$$
\Lambda^\alpha u(x^*)\geq C^2_{\alpha, \delta} \frac{u(x^*)^{1+\frac{\alpha}{2-\delta}}}{\|\phi\|_{C^\delta}^{1+\frac{\alpha}{2-\delta}}},
$$
\end{enumerate}
where $C^i_{\alpha, \delta} $ are explicit constants depending only on $\alpha$ {and $\delta$}.
\end{teo}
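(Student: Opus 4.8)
The plan is to exploit the singular integral representation \eqref{Lkernelalpha} of $\Lambda^\alpha$ evaluated at the maximum point $x^*$, combined with the regularity information encoded in the $C^\delta$-norm of the potential $\phi$. Since $\partial_{x_1}\partial_{x_2}\phi = u$, the assumption $\phi \in C^\delta$ controls second-order difference quotients of $\phi$, which in turn bound \emph{increments} of $u$ from below at short distances from $x^*$. The leading term of $\Lambda^\alpha u(x^*)$ is the principal-value piece
\[
\Lambda^\alpha u(x^*) = c_{\alpha,d}\,\text{P.V.}\int_{\TT}\frac{u(x^*)-u(x^*-\eta)}{|\eta|^{2+\alpha}}\,d\eta + (\text{periodized tail}),
\]
and because $u(x^*)$ is the maximum, every integrand $u(x^*)-u(x^*-\eta)\ge 0$, so the whole expression is nonnegative and bounded below by any truncation of the integral to an annulus $r\le|\eta|\le \rho$ for radii $r,\rho$ we are free to optimize.

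First I would estimate the numerator $u(x^*)-u(x^*-\eta)$ from below on the region where $u(x^*-\eta)$ is not too close to $u(x^*)$. The standard device is to split $\TT = \{|\eta|\le\rho\}\cup\{|\eta|>\rho\}$ and, on the inner region, to discard the set where $u$ stays near its maximum: writing $M=\|u\|_{L^\infty}=u(x^*)$, the set $\{u(x^*-\eta)>M/2\}$ contributes a controlled piece, while on its complement the numerator is at least $M/2$. The measure of the ``bad'' set $\{u(x^*-\eta)>M/2,\ |\eta|\le\rho\}$ is what must be controlled using $\|\phi\|_{C^\delta}$: since $u=\partial_{x_1}\partial_{x_2}\phi$, a second difference of $\phi$ of the form $\phi(x+h e_1 + k e_2) - \phi(x+he_1)-\phi(x+ke_2)+\phi(x)$ is bounded by $\|\phi\|_{C^\delta}$ times a power of the step sizes, and (after integrating $u$ over a small rectangle, i.e. reconstructing $u$'s local averages from $\phi$) this forces $u$ to be large only on a set of small measure. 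Concretely this yields a bound of the type $|\{u>M/2,\ |\eta|\le\rho\}| \le C (\|\phi\|_{C^\delta}/M)^{?}\rho^{?}$; balancing this against the full ball volume $\sim\rho^2$ selects the admissible inner radius $\rho$.

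I would then insert these bounds into the annular integral: on the good set we get a lower bound
\[
\Lambda^\alpha u(x^*)\ \ge\ \frac{c_{\alpha,d}}{2}\,M\!\!\int_{\{u(x^*-\eta)\le M/2,\,|\eta|\le\rho\}}\frac{d\eta}{|\eta|^{2+\alpha}}\ \gtrsim\ M\,\rho^{-\alpha}
\]
once the good set is shown to carry a definite fraction of the ball $\{|\eta|\le\rho\}$, which is exactly where the smallness of the bad set (hence the dichotomy) enters. Optimizing over $\rho$ — choosing $\rho$ as the natural length at which the $C^\delta$-bound on $\phi$ and the value $M$ balance, i.e. $\rho \sim (\|\phi\|_{C^\delta}/M)^{1/(2-\delta)}$ — converts $M\rho^{-\alpha}$ into $M^{1+\frac{\alpha}{2-\delta}}\|\phi\|_{C^\delta}^{-\frac{\alpha}{2-\delta}}$, which is precisely alternative (2). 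The alternative (1) arises as the degenerate case: if the balancing radius $\rho$ is forced to exceed the size of the torus (equivalently, $M$ is already comparable to $\|\phi\|_{C^\delta}$ up to the relevant power), the mechanism produces no gain and one simply records $\|u\|_{L^\infty}\le C_{\alpha,\delta}^1\|\phi\|_{C^\delta}$ directly.

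The main obstacle I anticipate is the measure estimate for the bad set $\{u > M/2\}$ near $x^*$ in terms of $\|\phi\|_{C^\delta}$: the passage from the $C^\delta$-control of $\phi$ to pointwise-or-measure control of its mixed second derivative $u$ is the crux, since one cannot differentiate $\phi$ directly at this regularity. The right route is to test against local averages — integrate $u=\partial_{x_1}\partial_{x_2}\phi$ over small rectangles so that the derivatives fall onto $\phi$ as second differences, use the H\"older bound to estimate those differences by $\|\phi\|_{C^\delta}$ times (side length)$^\delta$, and then argue that $u$ cannot exceed $M/2$ on a large-measure set without violating this averaged bound. Getting the exponents in this step to match $1+\frac{\alpha}{2-\delta}$ exactly, rather than merely up to the correct scaling, is the delicate computation the proof must carry out.
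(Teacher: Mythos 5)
Your plan is correct and it reaches the stated dichotomy with the right exponents, but it takes a genuinely different route from the paper. The paper's proof is a Constantin--Vicol-style cutoff argument: it fixes a smooth $\zeta$ vanishing on $B_1$ and equal to $1$ outside $B_2$, inserts $\zeta(y/\mathfrak{R})$ into the (pointwise nonnegative) integrand, keeps the far-field piece $u(x^*)\int_{\TT\cap B_{2\mathfrak{R}}^c}|y|^{-2-\alpha}dy\sim u(x^*)\mathfrak{R}^{-\alpha}$ as the main term, and treats the remainder by writing $u(x^*-y)\zeta(y/\mathfrak{R})$ as $\partial_{y_1}\partial_{y_2}$ of $\phi$-differences and integrating by parts twice, so that the error is bounded by $\|\phi\|_{C^\delta}\int |y|^{\delta}\left|\partial_{y_1}\partial_{y_2}\left(\zeta(y/\mathfrak{R})|y|^{-2-\alpha}\right)\right|dy\lesssim \|\phi\|_{C^\delta}\mathfrak{R}^{\delta-2-\alpha}$; then $\mathfrak{R}\sim(\|\phi\|_{C^\delta}/u(x^*))^{1/(2-\delta)}$ is chosen, and alternative (1) corresponds exactly to this radius violating the admissibility constraint $\mathfrak{R}<\pi/2^{1+\alpha}$ --- the same balancing and the same role of the torus-size constraint as in your plan. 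You replace cutoff-plus-integration-by-parts by a Chebyshev-type measure estimate: rectangle averages of $u$ are second differences of $\phi$, hence bounded by $2\|\phi\|_{C^\delta}\min(h,k)^{\delta}$, so the bad set $\{u>M/2\}\cap B_\rho$ has measure $\lesssim \|\phi\|_{C^\delta}\rho^{\delta}/M$ and the good set carries a fixed fraction of $B_\rho$ once $\rho\gtrsim(\|\phi\|_{C^\delta}/M)^{1/(2-\delta)}$, producing the main term from the near field rather than the far field. Your version is more elementary (no cutoff construction, no integration by parts); the paper's version is the more standard mechanism and, as noted below, slightly more robust with respect to the sign of $u$.

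One point you must make explicit: your bad-set bound uses the pointwise inequality $u\geq (M/2)\chi_{\{u>M/2\}}$, i.e., nonnegativity of $u$. For sign-changing $u$ the rectangle average can be small by cancellation while $\{u>M/2\}$ fills most of $B_\rho$, so the Chebyshev step as stated fails; the fix is cheap (wherever $u(x^*-\eta)<0$ the increment $u(x^*)-u(x^*-\eta)\geq M$ is even larger, so split into three regions instead of two), and in the paper's application $u\geq 0$ anyway, but as written your argument needs this hypothesis, whereas the paper's error term only requires $u(x^*)\geq 0$ because it is controlled through $|\phi(x^*)-\phi(x^*-y)|\leq\|\phi\|_{C^\delta}|y|^{\delta}$ irrespective of the sign of $u$ away from the maximum. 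A final cosmetic remark: your computation yields the denominator $\|\phi\|_{C^\delta}^{\alpha/(2-\delta)}$ in alternative (2), and so does the paper's own proof; the exponent $1+\frac{\alpha}{2-\delta}$ appearing in the theorem's display is an inconsistency internal to the paper, not a defect of your argument.
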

\begin{proof}
Define a smooth cutoff function $\zeta(x)$ such that $\zeta(x)=0$ if $|x|\leq 1$, $\zeta(x)=1$ if $|x|\geq 2$. Notice that $\zeta$ can be taken such that $\|\zeta\|_{C^2}\leq \mathcal{C}$ for a large enough $\mathcal{C}$. Let $\mathfrak{R}>0$ be a constant such that
\begin{equation}\label{required}
\mathfrak{R}<\frac{\pi}{2^{1+\alpha}}.
\end{equation}
This constant will be fixed later.

Using the kernel representation for $\Lambda^\alpha$ and integrating by parts, we have
\begin{align*}
\frac{\Lambda^\alpha u(x^*)}{c_{\alpha}}&\geq \text{P.V.}\int_{{\TT}} \frac{u(x^*)-u(x^*-y)}{|y|^{2+\alpha}}dy\\
&\geq \text{P.V.}\int_{{\TT}} \frac{u(x^*)-\partial_{y_1}\partial_{y_2}(\phi(x^*)-\phi(x^*-y))\zeta(y/\mathfrak{R})}{|y|^{2+\alpha}}dy\\
&\geq u(x^*)\int_{{\TT}\cap B^c_{2\mathfrak{R}}} \frac{1}{|y|^{2+\alpha}}dy-\|\phi\|_{C^\delta}\int_{{\TT}} |y|^{\delta}\left|\partial_{y_1}\partial_{y_2}\left(\frac{\zeta(y/\mathfrak{R})}{|y|^{2+\alpha}}\right)\right|dy\\
&\geq c_1\frac{u(x^*)}{\mathfrak{R}^\alpha}-c_2\frac{\|\phi\|_{C^\delta}}{\mathfrak{R}^{2+\alpha-\delta}},
\end{align*}
where in the last equation we have used \eqref{required}. Now let us choose
$$
\mathfrak{R}=c_3\frac{\|\phi\|_{C^\delta}^{1/(2-\delta)}}{u(x^*)^{1/(2-\delta)}},
$$
and obtain that, if $\mathfrak{R}$ fulfils \eqref{required}, 
$$
\Lambda^\alpha u(x^*)\geq C^2_\alpha\frac{u(x^*)^{1+\frac{\alpha}{2-\delta}}}{\|\phi\|^{\frac{\alpha}{2-\delta}}}.
$$
Otherwise, $\mathfrak{R}$ does not fulfil \eqref{required} and we conclude that
$$
\|u\|_{L^\infty}\leq C^1_\alpha\|\phi\|_{C^\delta}. 
$$
\end{proof}

\begin{remark}
A similar result holds in the case $u\in \mathcal{S}(\RR^2)$.
\end{remark}

\section{Proof of Theorem \ref{th:globalweak}}\label{sec:5}

\subsection{Approximate problems}
Let $T>0$ be an arbitrary fixed number. For $\epsilon \in (0,1)$, we define the following {approximate} problem on $\TT$
\begin{equation}\label{eqapprox}
\pat u_\epsilon=\epsilon\Delta u_\epsilon-\Lambda ^\alpha u_\epsilon+ \chi \pax(u_\epsilon B(u_\epsilon))+ ru (1-u),
\end{equation}
where $B$ agrees with \eqref{f:B1} or with \eqref{f:B}. The mollified initial datum $u_\epsilon$ is given as
$$
u_\epsilon(0,x)=\jeps u_0(x)\geq0.
$$
Along the lines of Lemma \ref{localexistence} we have local existence of a smooth solution $u_\epsilon$. Furthermore, recalling Theorem 2.5 of  \cite{TelloWinkler}, we have global existence of regular solutions. Namely, the differences between the system considered in  \cite{TelloWinkler} and  \eqref{eqapprox} with $B$ given by \eqref{f:B1} are: presence of a smoothening $-\Lambda ^\alpha$ in \eqref{eqapprox} and more basic `boundary conditions' in \eqref{eqapprox}. In case of $B$ given by \eqref{f:B} the proof is analogous.

\subsection{Uniform estimates}
We will use in what follows the thesis of Lemma \ref{Lemmanorms} for $u_\epsilon$ in place of $u$, since it holds for  $u_\epsilon$ by the same token as for $u$. Recall that we denote
\[ [k]_+ = \begin{cases} k & \text{ for } 0 <k < \infty,\\
\text{an arbitrary finite number} & \text{ otherwise } (k <0 \text{ or } \infty).  \end{cases}
\]
Lemma \ref{Lemmanorms}  and the Young inequality for convolutions give the following $\epsilon$-independent bounds
\begin{equation}\label{equnif1}
\sup_{0\leq t\leq T}\|u_\epsilon(t)\|_{L^{1}}\leq \mathcal{N},
\end{equation}

\begin{equation}\label{equnif4}
\int_0^T\|u_\epsilon(\tau)\|^{2}_{L^{2}}d\tau \leq   \mathcal{N} (1/r+ T).
\end{equation}
Furthermore, the largest \[s_0 = \left[ \frac{r}{\chi - r}\right]_+\] following from equality in \eqref{lemnom1}, yields for any $1 < p \le {[ \frac{\chi}{\chi - r}]_+}$

\begin{equation}\label{equnif3}
\sup_{0\leq t\leq T}\|u_\epsilon(t)\|_{L^{p}}\leq e^{r T}\| u_0 \|_{L^{p}},
\end{equation}
\begin{equation}\label{equnif4a}
\int_0^T  \|u_\epsilon^\frac{p}{2} (\tau) \|^2_{H^\frac{\alpha}{2}} d \tau \le C(r, s)\;  e^{r T}\| u_0^\frac{p}{2} \|^2_{L^{2}},
 \end{equation}
 where  in the case $r \geq \chi/2$ we can have $p=2$,
as well as
\begin{equation}\label{equnif41}
\int_0^T \|u_\epsilon (\tau) \|^{1+p^-}_{L^{1+ p^-}}  d \tau \le C (r, p^-, \chi) [ e^{(r p^-)T } +1] \| u_0 \|^{p^-}_{L^{p^-}}.
\end{equation}
We have also
\begin{equation}\label{equnif2}
 \int_0^T\|u_\epsilon(\tau)\|_{\dot{W}^{\alpha/2-\delta,1}}^2 d \tau \leq C(\alpha, \delta)  \mathcal{N}F_2(u_0,T,r,\mathcal{N},\chi)  \qquad \text{ for any }   0<\delta<\alpha/2,
\end{equation}
where
$$
F_2(u_0,T,r,\mathcal{N},\chi)= \|u_0\|^2_{L^{2}} +  \mathcal{N} (\chi+r+ T  + 1).
$$
Finally, for any $s \le \min(1, s_0)$ it holds also
\begin{equation}\label{equnif42}
\int_0^T\|u_\epsilon (\tau)\|_{\dot{W}^{\alpha/(2+2s)-\delta,1+s}}^{2+2s} d \tau \leq C(\alpha,s,\delta) F_1(T,r,s)\| u_0 \|^{2+2s}_{L^{1+s}} \qquad  \text{ for any } 0<\delta<\alpha/(2+2s),
\end{equation}
where
\[
F_1(T,r,s)=(rT+s+1) e^{2 r T}.
\]

\subsection{Convergence}
Due to the uniform bound \eqref{equnif4} we have 
\begin{equation}\label{weak_tder}
\pat u_\epsilon\in L^2(0,T;H^{-2})
\end{equation}
$\epsilon$-uniformly bounded via a generous estimate of $\pat u_\epsilon$ by the remainder of  \eqref{eqapprox}.

Let us recall the Aubin-Lions Theorem. If both $q_1, q_0 \in (1, \infty)$, then for a Gelfand's triple of reflexive Banach spaces $X_0 \subset \subset X \subset X_1$, the evolutionary space
$$
Y=\{h: \, h\in L^{q_0} (0,T;X_0), \; \pat h \in L^{q_1} (0,T;X_1)\}
$$
is compactly embedded in $L^{q_0} (0,T;X)$. 

\subsubsection{Case $2 r\geq \chi$} Let us first consider the case $2 r\geq \chi$, where \eqref{equnif4a} holds with $p =2$. Hence we can extract a subsequence (hare and later, we denoted a subsequence again by $u_\epsilon$) such that
$$
u_\epsilon\rightharpoonup u \text{ in }L^2(0,T;H^{\alpha/2}).
$$
Let us take the following Gelfand's triple
$$
X_{1}=H^{-2},\,X=L^2,\,X_0=H^{\alpha/2}.
$$
The Aubin-Lions Theorem with $q_0=q_1=2$  implies via \eqref{equnif4a} that
$$
u_\epsilon\rightarrow u \text{ in }L^2(0,T;L^2).
$$
This strong compactness is enough to pass to the limit in the weak formulation, since $B$ of the nonlinear term $u B(u)$ is a compact operator in $L^2$. As a consequence, $u$ is a global weak solution (in the sense of Definition \ref{def:weaksol}) of the original system \eqref{eqDD}. Its regularity follows from the uniform bounds \eqref{equnif1} -- \eqref{equnif42} and the lower weak (or $*$-weak,  where applicable) continuity of respective norms.

\subsubsection{Case $2 r < \chi$}  Let us explain how to adapt the proof for the second case  $r < \chi/2$, where  the largest $s_0 = [ \frac{r}{\chi - r}]_+ =\frac{r}{\chi - r}  \in (0,1)$ and   $p \le {[ \frac{\chi}{\chi - r}]_+} =  \frac{\chi}{\chi - r} < 2$, in view of the bound on $p$ of \eqref{equnif3} --  \eqref{equnif41}. Instead of  \eqref{equnif4a}, we intend to use in this case \eqref{equnif42}. Notice that for $s \le s_0 =\frac{r}{\chi - r}  \in (0,1)$
one has 
\[W^{(\alpha/(2+2s))^-,1+s}\subset\subset L^\xi, \quad \xi < \frac{4+4s}{4-\alpha}.\]
We choose now the related Gelfand's triple
$$
X_{1}=H^{-2},\,X=L^\xi,\,X_0=W^{(\alpha/(2+2s))^-,1+s}.
$$
Hence the Aubin-Lions Theorem with $q_1 = 2, q_0 = 2+2s$ implies that one can extract a subsequence (denoted again by $u_\epsilon$) such that
$$
u_\epsilon\rightarrow u \text{ in }L^{2+2s}(0,T;L^\xi).
$$
Let us observe that a limit passage in the chemotactic term $u_\epsilon B (u_\epsilon)$ is easier than in the term $u^2_\epsilon$, since $B$ is a compact operator in $L^2$. Let us therefore 
consider the limit passage in the most troublesome part $-r u^2_\epsilon $
\begin{equation}\label{eq:relaxC}
r \int_0^T \int_\TT  | (u^2_\epsilon - u^2) \phi| \le r \|\phi\|_{L^\infty (L^\infty)}  \| u_\epsilon - u\|_{L^{2+2s} (L^\xi)}  \| u_\epsilon +u\|_{L^{\frac{2+2s}{1+2s}} (L^{\xi'})}.
\end{equation}
In order to control the last term of \eqref{eq:relaxC}, we want to use additionally \eqref{equnif41}. To this end one needs $\xi' < 1+ p^-$ (the other condition related to the time integrability always holds) or equivalently $\xi' < \frac{\chi}{\chi - r} +1$, \emph{i.e.} 
\[
\xi' < \frac{2\chi-r}{\chi - r} \iff \frac{4+4s}{\alpha +4s} < \frac{2\chi-r}{\chi - r}  \iff 4 \frac{ \chi (1-s) - r}{{2\chi-r}} < \alpha
\]
\emph{i.e.}  \eqref{eqhardr} for the best possible $s=s_0$.
Consequently, the right-hand-side of \eqref{eq:relaxC} vanishes as $\epsilon \to 0$ thanks to strong convergence in $L^{2+2s}(0,T;L^\xi)$ and boundedness in $L^{1+p^-}( 0, T; L^{1+p^-}) \subset {L^{\frac{2+2s}{1+2s}} (L^{\xi'})} $.

\section{Proof of Theorem \ref{th:globalstrong}}\label{sec:6}
This proof is independent from the proof of Theorem \ref{th:globalweak}. For the sake of clarity in the exposition, let us consider the case $u_0 \in H^k$ with $k=4$, the cases $k>4$ being similar. Eventually, we will recover the cases $k=2,3$ (compare Section \ref{ssec:low}).

Let us take any $u_0 \in H^4$ and use Lemma \ref{localexistence} to obtain a classical local-in-time solution on the time interval $[0, T_{max})$:
$$
u\in C([0,T_{max}); H^4(\TT))   \quad \cap \quad C^{2,1} (\TT \times [0,T_{max}(u_0) ) )
$$ 
to \eqref{eqPKS}. The standard continuation argument for autonomous ODE in Banach spaces implies that either $T_{max}=\infty$ or $T_{max}<\infty$ and 
$$
\limsup_{t\rightarrow T_{max}}\|u(t)\|_{H^4(\TT)}=\infty.
$$
Then, due to Lemma \ref{localexistence}, an estimate showing 
\begin{equation}\label{desired}
\|u\|_{L^\infty(0,T_{max};L^\infty(\TT))}\leq G(T_{max})<\infty,
\end{equation}
provides us with the estimate
$$
\limsup_{t\rightarrow T_{max}}\|u(t)\|_{H^4(\TT)}\leq F(T_{max})<\infty,
$$
and thus, it proves the global existence of solution.

As in Theorem \ref{th:max}, let us denote by $x_t^*$ the point such that
$$
u(x_t^*,t)=\max_{x\in \TT} u(x,t).
$$
Let us also define 
$$
\bar{u}(t)=u(x_t^*,t).
$$

Since $u\in C^1(\TT\times (0,T_{max}))$, the function $\bar{u}(t)$ is Lipschitz and, consequently, it has a derivative almost everywhere. Using vanishing of a derivative at the point of maximum, we see that
$$
\frac{d}{dt}\bar{u}(t)=\partial_t u(x^*_t,t).
$$
Inequality \eqref{divB} for $B$ given by  \eqref{f:B1} implies that the evolution of $\bar u (t)$ follows
\begin{equation}\label{ODE}
\frac{d}{dt} \bar u +  \Lambda^\alpha  u(x^*_t,t) \leq \chi\bar{u}^2+ r\bar{u}(1-\bar{u}).
\end{equation}

\subsection{The most-damped case $r\geq\chi$}
Due to the weak maximum principle we obtain from \eqref{ODE}
$$
\frac{d}{dt} \bar u\leq \chi\bar{u}^2+ r\bar{u}(1-\bar{u}),
$$
and, if $r\geq\chi$, we conclude
$$
\bar{u}(t)\leq e^{rt}\bar{u}(0).
$$

\subsection{The least-damped case $2r < \chi$}
First, let us define 
$$
\phi=\int_{-\pi}^{x_1}\int_{-\pi}^{x_2}u(y_1,y_2)dy_{1}dy_2
$$
and recall that the best possible choice of exponent:
\begin{equation}\label{eq:sch}
p_0= \frac{\chi}{\chi -r}
\end{equation}
in  \eqref{aee3}  implies
\begin{equation}\label{aaar}
\sup_{0\leq t\leq T}\|u (t)\|_{L^{p_0}}\leq e^{r T}\| u_0 \|_{L^{p_0}},
\end{equation}
compare \eqref{equnif3}. Notice that $p_0 \in(1, 2)$. It holds
$$
|\phi(x+h)-\phi(x)|= \left|\int_{x_1}^{x_1+h_1}\int_{x_2}^{x_2+h_2}u(y_1,y_2)dy_{1}dy_2\right|\leq \|u\|_{L^{p_0}}(h_1h_2)^{\frac{p_0-1}{p_0}} \le  \|u\|_{L^{p_0}} 2^{\frac{1-p_0}{p_0}} |h|^{\frac{2(p_0-1)}{p_0}}
$$
thus
\begin{equation}\label{max:p1}
\|\phi\|_{C^{\frac{2(p_0-1)}{p_0}}}\leq c\|u\|_{L^{p_0}}.
\end{equation}
Inequality \eqref{max:p1} used in our nonlinear maximum principle (Theorem \ref{th:max}) with $\delta={\frac{2(p_0-1)}{p_0}} \; (<1)$ implies the following alternative:

\begin{align}
&\text{Either } \quad  \|u\|_{L^\infty}\leq C_\alpha^1\|\phi\|_{C^\delta} \leq c\,   C_\alpha^1 \|u\|_{L^{p_0}} \label{max:l1} \\
&\text{or } \quad  \Lambda^\alpha u(x^*)\geq C^2_\alpha\frac{u(x^*)^{1+\frac{\alpha}{2-\delta}}}{\|\phi\|_{C^\delta}^{1+\frac{\alpha}{2-\delta}}} \geq C^2_\alpha\frac{u(x^*)^{1+\frac{\alpha p_0}{2}}}{\|\phi\|_{C^{\frac{2(p_0-1)}{p_0}} }^{1+\frac{\alpha p_0}{2}}} \geq C^2_\alpha \frac{u(x^*)^{1+\frac{\alpha p_0}{2}}}{c\|u\|^{1+\frac{\alpha p_0}{2}}_{L^{p_0}}}. \label{max:l2}
\end{align}
Consequently, inequalities \eqref{aaar}, \eqref{max:l1} and \eqref{max:l2} yield for $u (t) $ being a classical local in time solution to \eqref{eqPKS}
\begin{align}
&\text{Either } \quad   \|u \|_{L^\infty (0,T_{max};L^\infty(\TT))}   \le c \;C_\alpha^1 \| u \|_{L^\infty (0,T_{max};L^{p_0}(\TT))} \le c \;C_\alpha^1 e^{r T}\| u_0 \|_{L^{p_0}} \le  C_1,  \label{max:l1s} \\
&\text{or } \quad  \Lambda^\alpha u(x_t^*,t) \geq  \frac{C^2_\alpha      \bar u(t)^{1+\frac{\alpha p_0}{2}}}{c \| u \|^{1+\frac{\alpha p_0}{2}} _{L^\infty (0,T_{max};L^{p_0}(\TT))}} \ge   \frac{C^2_\alpha      \bar u(t)^{1+\frac{\alpha p_0}{2}}}{c  e^{{(1+\frac{\alpha p_0}{2})} r T}\| u_0 \|^{1+\frac{\alpha p_0}{2}}_{L^{p_0}} }
 \ge C^{-1}_1 \bar u(t)^{1+\frac{\alpha p_0}{2}}, \label{max:l2s}
\end{align}
where $C_1 = C_1 (\alpha, r, T, p_0, \| u_0 \|_{L^{p_0}}) $, but, more importantly, $C_1$ being $T_{max}$-independent and finite for any $T$ (for nonzero data). Inequality \eqref{max:l1s} has already the desired form \eqref{desired}. Our aim is now to show that \eqref{max:l2s} also implies \eqref{desired}. Equation \eqref{ODE} together with \eqref{max:l2s} gives
\begin{equation}\label{odeh}
\frac{d}{dt}\bar u + C^{-1}_1   \bar u^{1+\frac{\alpha p_0}{2}}   \le (\chi-r) \bar u^2 + r\bar{u},
\end{equation}
hence, as long as it holds  ${1+\frac{\alpha p_0}{2}} >2$ or, equivalently,  \eqref{eq:smooth} (via \eqref{eq:sch}), we obtain
\[
\frac{d}{dt} \bar u   \le C_2 
\]
Integration in time gives us again \eqref{desired}.

\begin{remark}
It may be appealing to use a stronger estimate of Lemma \ref{Lemmanorms2} instead of \eqref{aaar}, in order to obtain a wider admissibility range for $\alpha$'a than  \eqref{eq:smooth}. However, let us observe that the assumption \eqref{apstrongC} of Lemma \ref{Lemmanorms2} coerces immediately \eqref{eq:smooth}. (interestingly, we obtained the same bound for our pointwise proof and for the strong estimates.) It may  suggest that Lemma \ref{Lemmanorms2} is useless, since it provides merely $L^\infty (L^p)$ estimates precisely in the range where we just proved smoothness (classical solutions). It turns out that Lemma \ref{Lemmanorms2} will turn essential as a patch in the following section.
\end{remark}

\subsection{The intermediate case $r<\chi\leq 2r$}
Recovering the full admissible range  \eqref{eq:smooth}, \emph{i.e.} $\alpha>\max{ \left( 2 - \frac{2r}{\chi}, 0 \right) }$ turns out to bear an unexpected difficulty in this case. Namely, now \eqref{eq:sch} in \eqref{aaar} implies the optimal $p_0 =\frac{\chi}{\chi -r} \ge 2$, consequently the related  $\delta={\frac{2(p_0-1)}{p_0}} \ge 1$, whereas our nonlinear maximum principle (Theorem \ref{th:max}) allows for $\delta \in (0,1)$. Of course we can choose a suboptimal $p_0 =2^-$ and the related $\delta= 1^-$, but then to close the estimate \eqref{odeh}, we need $\alpha>1$, which is an additional bound in this intermediate range  $r<\chi<2r$ compared to  \eqref{eq:smooth}. In order to cope with this obstacle, let us use another nonlinear maximum principle  in place of Theorem \ref{th:max}, namely Lemma 1 of \cite{Gsemiconductor}, Appendix A. It implies in place of the dichotomy  \eqref{max:l1} -- \eqref{max:l2}
\begin{equation}
\Lambda^\alpha u(x_t^*)  \geq C (\alpha,p) \frac{u(x^*)^{1+\frac{\alpha p}{2}}}{\|u\|^{1+\frac{\alpha p}{2}}_{L^{p}}}. \label{max:l2_n}
\end{equation}
and, consequently, in place of  \eqref{max:l1s} -- \eqref{max:l2s} (we choose immediately $p_0=\frac{\chi}{\chi -r}$)
\begin{equation}
 \Lambda^\alpha u(x_t^*,t) \geq  \frac{C \left(\alpha, \frac{\chi}{\chi -r}\right)      \bar u(t)^{1+\frac{\alpha \frac{\chi}{\chi -r}}{2}}}{\| u \|^{1+\frac{\alpha \frac{\chi}{\chi -r}}{2}} _{L^\infty (0,T_{max};L^{\frac{\chi}{\chi -r}}(\TT))}} \ge   \frac{C \left(\alpha, \frac{\chi}{\chi -r}\right)    \bar u(t)^{1+\frac{\alpha \frac{\chi}{\chi -r}}{2}}}{ \left(C_1  \left( e^{C_2 T} +1\right)  \| u_0 \|_{L^{\frac{\chi}{\chi -r}}}^{C_2} \right)^{1+\frac{\alpha \frac{\chi}{\chi -r}}{2}}}
 \ge \delta_1 \bar u(t)^{1+\frac{\alpha \frac{\chi}{\chi -r}}{2}}, \label{max:l2s_n}
\end{equation}
where for the second inequality we used \eqref{apstrong} of Lemma \ref{Lemmanorms2}. Here \begin{equation}\label{eq:delta}
\delta_1 = \delta_1 \left(\alpha, r, \chi, T, \frac{\chi}{\chi -r}, \| u_0 \|_{L^{\frac{\chi}{\chi -r}}}\right) >0.
\end{equation}
Let us assume that the solution has a finite maximal lifespan $T_{max}$. In particular, due to Lemma \ref{localexistence} and condition \eqref{eq:contC}, we have that
\begin{equation}\label{eq:contC2}
\int_0^{T_{max}}\|u(s)\|_{L^\infty}ds=\infty.
\end{equation}
Using \eqref{ODE}, we have the inequality
\begin{equation}\label{ODE2}
\frac{d}{dt} \bar u + \delta_1 \bar{u}^{\gamma} \leq \chi\bar{u}^2+ r\bar{u}(1-\bar{u}),
\end{equation}
where 
$$
\gamma:=1+\frac{\alpha \frac{\chi}{\chi -r}}{2}>2,
$$
because in the range $\alpha>\max{ \left( 2 - \frac{2r}{\chi}, 0 \right) }$ it holds
$$
1+\frac{\alpha \frac{\chi}{\chi -r}}{2}>1+\max{ \left( \frac{\chi-r}{\chi}, 0 \right) } \frac{\chi}{\chi -r}=2.
$$
Inequality \eqref{ODE2} implies the bound
$$
\sup_{0\leq t \leq T_{max}}\|u(t)\|_{L^\infty}\leq C(\alpha,r,u_0,\chi,T_{max}).
$$
This is a contradiction with \eqref{eq:contC2}.

The proof of the statement of Theorem \ref{th:globalstrong} for $k \ge 4$, including the last sentence of Theorem \ref{th:globalstrong}, is complete.

\begin{remark}
Let us note that the recent argument for case $r<\chi \le 2r$, based on Lemma \ref{Lemmanorms2} and Lemma 1 of \cite{Gsemiconductor}, works also fine for all the other cases. However, we decided in the case $2r< \chi$ for a proof that uses (and thus presents in action) a new nonlinear maximum principle, since we believe it may be of independent interest.
\end{remark}
\subsection{Lowering the initial regularity}\label{ssec:low}
Finally, in order to allow for a lower regularity data $u_0 \in H^k$, $k>1$ observe that the entire proof can be made for a solution to \eqref{eqPKS} with a mollified $u^\epsilon_0 \in H^4$. We obtain then a classical solution $u^\epsilon$ with an $\epsilon-$uniform bound \eqref{eq:infB}, since for  $k>1$ it holds $H^k \subset L^\infty$. 

In \cite{AGM} the following estimate is obtained
\begin{equation}\label{eq:low:4}
\|u^\epsilon(t)\|_{\dot{H}^2}\leq\|u^\epsilon_0\|_{\dot{H}^2} e^{\int_0^t \|\nabla B\|_{L^\infty}ds}
\end{equation}
for the case where $B$ verifies \eqref{f:B}. A similar estimate holds also when $B$ solves \eqref{f:B1}. Recalling the classical inequality (see formula (3.2b) and Appendix 1 in \cite{kato1986well})
$$
\|\mathcal{T}f\|_{L^\infty}\leq c(\|f\|_{L^p}+\|f\|_{L^\infty}\max\{1,\log(\|f\|_{W^{s-1,p}}/\|f\|_{L^\infty}))\},\quad 1<p<\infty, \; s>1+2/p
$$
where $\mathcal{T}$ is a singular integral operator of Calder\'on-Zygmund type and using that $\nabla B$ is a zeroth order singular integral operator, we have that
$$
\|\nabla B\|_{L^\infty}\leq c(\|u^\epsilon\|_{L^2}+\|u^\epsilon\|_{L^\infty}\max\{1,\log(c\|u^\epsilon \|_{\dot{H}^{2}})\}),
$$
since, using mass conservation, $\|f\|_{H^{2}}/\|f\|_{L^\infty} \le 4 \pi^2 \|f\|_{\dot H^{2}}/\|f\|_{L^1}  +c\le c \|f\|_{H^{2}} +c$. Using \eqref{eq:infB} to the r.h.s. above and then plugging the resulting estimate into \eqref{eq:low:4}, we obtain the uniform estimate 
$$
\|u^\epsilon(t)\|_{\dot{H}^2}\leq H(T, \|u_0\|_{L^1},\|u_0\|_{L^\infty}, \|u_0\|_{\dot{H}^2},\alpha,r,\chi),
$$
where $H$ is an explicit function depending on $F$ in \eqref{eq:infB}. For the case $k=3$ we can repeat the same ideas and obtain that
$$
\|u^\epsilon(t)\|_{\dot{H}^3}\leq K(T\|u_0\|_{L^1},\|u_0\|_{L^\infty}, \|u_0\|_{\dot{H}^2},\|u_0\|_{\dot{H}^3},\alpha,r,\chi),
$$
where $K$ is another explicit function depending on $F$ in \eqref{eq:infB} and $H$.

Summing up, we can close estimate \eqref{eq:low:4} $\epsilon-$uniformly, hence (for an arbitrary $T < \infty$) obtaining the respective $\epsilon-$uniform estimates in 
$$
u^\epsilon\in L^\infty(0,T;H^k(\TT))\cap L^2(0,T;H^{k+\alpha/2}(\TT)),\quad k = 2,3
$$
Then we can pass to the limit and obtain
$$
u\in L^\infty(0,T;H^k(\TT))\cap L^2(0,T;H^{k+\alpha/2}(\TT)),\quad k = 2,3
$$
that solves  \eqref{eqPKS} in a weak sense. Moreover, using the structure of the Gronwall's inequality \eqref{eq:low:4} to obtain the continuity at time $t=0$ and the gain of regularity to obtain the continuity at later times $t>0$, we have that
\[
u\in C([0,T);H^k (\TT)).
\]
\section*{Acknowledgment}
JB is partially supported by the internal IMPAN grant for young researchers.
RGB is supported by the Labex MILYON and the Grant MTM2014-59488-P from the Ministerio de Econom\'ia y Competitividad (MINECO, Spain).
Part of the research leading to results presented here was conducted during a short stay of RGB at IMPAN within WCMCS KNOW framework.

\bibliographystyle{abbrv}
%\bibliography{bibliografia}\begin{thebibliography}{10}

\end{document}